\newtheorem{thm}{Theorem}[section]
\newtheorem{lem}[thm]{Lemma}
\newtheorem{defn}[thm]{Definition}
\newtheorem{cor}[thm]{Corollary}
\newcommand{\ud}{\,\mathrm{d}}
\begin{document}

\title{Multiscale methods with compactly supported radial basis functions for Galerkin approximation of elliptic PDEs}

\titlerunning{Multiscale methods for Galerkin approximation of elliptic PDEs}

\author{A. Chernih \and Q.T. Le Gia }

\institute{A. Chernih
\at School of Mathematics and Statistics, University of New South Wales, Sydney NSW 2052,
Australia\\Tel.: +61-410-697411, Fax: +612 93857123 \\\email{andrew@andrewch.com}
\and
Q.T. Le Gia
\at School of Mathematics and Statistics, University of New South Wales, Sydney NSW 2052,
Australia\\\email{qlegia@unsw.edu.au}
}

\maketitle
\begin{center}
\today
\end{center}

\begin{abstract}
The aim of this work is to consider multiscale algorithms for solving PDEs with Galerkin methods on bounded domains. We provide results on convergence and condition numbers. We show how to handle PDEs with Dirichlet boundary conditions. We also investigate convergence in terms of the mesh norms and the angles between subspaces to better understand the differences between the algorithms and the observed results. We also consider the issue of the supports of the RBFs overlapping the boundary in our stability analysis, which has not been considered in the literature, to the best of our knowledge.
\end{abstract}

%\begin{keywords}
%% keywords here, in the form: keyword \sep keyword

%% MSC codes here, in the form: \MSC code \sep code
%% or \MSC[2008] code \sep code (2000 is the default)

%\end{keywords}

\section{Introduction}

Radial basis functions (RBFs) have been increasingly important in the area of approximation theory. For solving partial differential equations (PDEs), RBFs have been studied more for meshless collocation \citep{GieW06,Fas07}, however their use with Galerkin methods have also been considered \citep{Wen99,Wen98b}. Two excellent recent books covering practical and theoretical issues are \cite{Fas07} and \cite{Wen05}. A function $\Phi : \mathbb{R}^d \rightarrow \mathbb{R}$ is said to be \textit{radial} if there exists a function $\phi: [0,\infty) \rightarrow \mathbb{R}$ such that $\Phi(\mathbf{x}) = \phi(\|\mathbf{x}\|_2)$ for all $\mathbf{x} \in \mathbb{R}^d$, where $\|\cdot\|_2$ denotes the usual Euclidean norm in $\mathbb{R}^d$.  Then we can define an RBF for a given centre $\mathbf{x}_i \in \mathbb{R}^d$ as
\begin{equation*}
\Phi(\mathbf{x}_i) := \phi(\|\mathbf{x}-\mathbf{x}_i\|_2).
\end{equation*}

A practical issue that arises is that of which scale to use for the radial basis functions. A small scale will lead to a sparse and consequently well-conditioned linear system, but at the price of poor approximation power. Conversely, a large scale will have better approximation power but at the price of an ill-conditioned linear system.

Many examples may naturally exhibit multiple scales, for example, constructing an approximation for the height of the earth's surface may suggest a ``large scale" to be used over desert regions and a ``fine scale" over areas of high variability, such as the Himalayas. It appears much more appropriate to allow different scales in different regions. Of course, this comes at the price of having to select which scales to use in which regions but this is not the topic of this paper.

The multiscale algorithms proposed in this paper are constructed over multiple levels, in which the residual of the current stage is the target function for the next stage, and at each stage, RBFs with smaller support and with more closely spaced centres will be used as basis functions.

Such a multiscale algorithm for interpolation was first proposed in \cite{FloI96} and \cite{Sch96} but without any theoretical grounding. Theoretical convergence was proven in the case of the data points being located on a sphere \citep{LeSW10} and then extended to interpolation and approximation on bounded domains \citep{Wen10}.

In \cite{Wen98b} we can find various experiments with two multiscale algorithms for constructing Galerkin approximations to PDEs on bounded domains. These two algorithms are studied in this paper. There was no proof of convergence for the first algorithm which we call the \textit{multiscale algorithm}. For the second multiscale algorithm, which we call the \textit{nested multiscale algorithm}, convergence was proven, making use of the fact that the weak formulation of a PDE can be interpreted as a Hilbert space projection method. Numerical experiments with both multiscale algorithms were also given.

The aim of this work is to consider multiscale algorithms for solving PDEs with Galerkin methods on bounded domains. We provide results on convergence and condition numbers. We show how to handle PDEs with Dirichlet boundary conditions which was not covered in the earlier papers. We also investigate convergence in terms of the mesh norms and the angles between subspaces to better understand the differences between the two algorithms and the observed results. We also consider the issue of the supports of the RBFs overlapping the boundary in our stability analysis, which has not been considered in the literature, to the best of our knowledge.

Related multilevel methods for the numerical solution of PDEs include domain decomposition \citep{SmiBG96} and the use of smoothing as a postconditioner with an iterative (multilevel) method \citep{Fas99}. The heterogeneous multiscale method \citep{WeiELRV07} differs in that it is multiscale in the time domain, whilst we work with multiple scales in the spatial domain. Multigrid methods \citep{BreS08} require the use of a computational mesh, whilst our approach is meshfree.

In the next section we review the background material required for the multiscale algorithms. Sections \ref{SectionPDEsNoDirchlt} and \ref{SectionPDEsDirchlt} describe how to construct Galerkin solutions to PDEs using radial basis functions for PDEs without and with Dirichlet boundary conditions respectively. Sections \ref{sectionMLgalerkin} and \ref{SectionNestedML} consider the multiscale and nested multiscale algorithms. Section \ref{SectionNumericalExperiments} provides the results of several examples using the two multiscale algorithms. Finally Section \ref{SectionAlphaAnalysis} provides an analysis of the convergence of the two algorithms.

\section{Preliminaries} \label{SectionPreliminaries}

In this paper, we will use (scaled) compactly supported radial basis functions to construct multiscale approximate solutions to PDEs, that is, we form the solution over multiple levels. We will work with a given domain $\Omega \subseteq \mathbb{R}^d$. A kernel $\Phi: \Omega \times \Omega \rightarrow \mathbb{R}$ is also given.

At each level, we will have a finite point set $X \subseteq \Omega$. We will define the \textit{fill distance} as
$$h_{X, \Omega} := \sup_{\mathbf{x} \in \Omega} \min_{\mathbf{x}_j \in X} \|\mathbf{x}-\mathbf{x}_j\|_2,$$
which is a measure of the uniformity of the points in $X$ with respect to $\Omega$. Then for example, at each level $i$, we denote the fill distance by $h_i$. The selection of point sets with fill distances decreasing in a specific way will form one of the requirements for convergence of our algorithms.

The functions that we will be concerned with are defined on a bounded domain $\Omega$ with a Lipschitz boundary. As a result, there is an extension operator for functions defined in Sobolev spaces which is presented in the following lemma \cite[Theorem 1.4.5]{BreS08}. For further details, we refer the reader to \cite{Ste70}.
\begin{lem}
\label{lemExtensionOperator}
Suppose $\Omega \subseteq \mathbb{R}^d$ has a Lipschitz boundary. Then there is an extension mapping $E: H^{\tau}(\Omega) \rightarrow H^{\tau}(\mathbb{R}^d)$, defined for all non-negative integers $\tau$, satisfying $Ev\vert_{\Omega} = v \mbox{ for all } v \in H^{\tau}(\Omega)$ and
\begin{equation*}
\|Ev\|_{H^{\tau}(\mathbb{R}^d)} \leq C \|v\|_{H^{\tau}(\Omega)}.
\end{equation*}
\end{lem}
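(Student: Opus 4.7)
The plan is to follow the classical Calder\'on--Stein construction, reducing the problem to the half-space case via localization. First, I would exploit the Lipschitz boundary assumption to pick a finite open cover $\{U_j\}_{j=0}^N$ of $\overline{\Omega}$ in which $U_0 \subset\subset \Omega$ and, for $j \geq 1$, each $U_j$ is a neighbourhood of a boundary point in which, after a rigid motion, $\partial\Omega \cap U_j$ is the graph of a Lipschitz function $x_d = \gamma_j(x')$. Subordinate to this cover I would fix a smooth partition of unity $\{\chi_j\}$ and decompose $v = \chi_0 v + \sum_{j\geq 1} \chi_j v$. The interior piece $\chi_0 v$ extends by zero, so the construction reduces to extending each boundary-localized piece $\chi_j v$ to $H^\tau(\mathbb{R}^d)$.

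Next, in each patch I would apply the bi-Lipschitz change of variables $\Psi_j(x', x_d) = (x', x_d - \gamma_j(x'))$, which flattens $\partial\Omega \cap U_j$ to a subset of the hyperplane $\{x_d = 0\}$ and sends $U_j \cap \Omega$ into the upper half-space $\mathbb{R}^d_+$. Under this pullback the task becomes extending a compactly supported function in $H^\tau(\mathbb{R}^d_+)$ to all of $\mathbb{R}^d$.

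For the half-space extension the key step is reflection. Given $f$ on $\mathbb{R}^d_+$, define
\begin{equation*}
(Ef)(x', x_d) =
\begin{cases}
f(x', x_d), & x_d \geq 0,\\
\displaystyle\sum_{k=1}^{\tau+1} a_k\, f(x', -k x_d), & x_d < 0,
\end{cases}
\end{equation*}
where the coefficients $a_k$ solve the Vandermonde-type system $\sum_{k=1}^{\tau+1} (-k)^j a_k = 1$ for $j = 0, 1, \ldots, \tau$. This forces continuity of weak derivatives of order up to $\tau-1$ across the interface and yields the desired $L^2$-bound on derivatives of order $\tau$ by a direct change of variables $x_d \mapsto -k x_d$ in each summand. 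Pushing back via $\Psi_j^{-1}$ and summing over $j$, with the Leibniz rule absorbing derivatives of $\chi_j$ into the constant, then assembles the global extension.

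The main obstacle is the interaction between merely Lipschitz charts and higher Sobolev regularity: composition with $\Psi_j^{-1}$ must be shown to preserve $H^\tau$. For each integer $\tau$ this follows from chain-rule expansions in which the essentially bounded (a.e.\ defined) derivatives of $\gamma_j$ appear as multipliers, but the bookkeeping is delicate and is precisely the content of Stein's theorem cited above. A more robust variant replaces the finite reflection by Stein's integrated formula $(Ef)(x', x_d) = \int_1^{\infty} f(x', -\lambda x_d)\,\psi(\lambda)\ud\lambda$ with a weight $\psi$ engineered so that the single operator $E$ is bounded $H^\tau(\Omega) \to H^\tau(\mathbb{R}^d)$ for every non-negative integer $\tau$ simultaneously, as the lemma demands.
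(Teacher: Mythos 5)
The paper does not actually prove this lemma; it is quoted verbatim from Brenner and Scott (Theorem 1.4.5), who in turn defer to Stein, so there is no in-paper argument to compare yours against. Judged on its own terms, your outline has a genuine gap at the flattening step. The partition of unity and the higher-order reflection across $\{x_d=0\}$ (with the Vandermonde conditions $\sum_k a_k(-k)^j=1$) are both fine for the half-space. The problem is the claim that composition with $\Psi_j^{-1}$, where $\Psi_j(x',x_d)=(x',x_d-\gamma_j(x'))$ and $\gamma_j$ is merely Lipschitz, preserves $H^{\tau}$ for every non-negative integer $\tau$, with ``the essentially bounded (a.e.\ defined) derivatives of $\gamma_j$ appearing as multipliers.'' Only the \emph{first} derivatives of $\gamma_j$ lie in $L^{\infty}$; a Lipschitz function in general has no second derivatives in any $L^p$, and the chain rule for $D^2(f\circ\Psi_j^{-1})$ produces terms containing $D^2\gamma_j$ times first derivatives of $f$. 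A bi-Lipschitz change of variables is therefore bounded on $H^1$ but not on $H^{\tau}$ for $\tau\ge 2$, and the flatten-then-reflect scheme proves the lemma only for $C^{\tau-1,1}$ boundaries, not Lipschitz ones. This is not delicate bookkeeping to be deferred to Stein; it is the obstruction his construction was designed to avoid.

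The repair is to not flatten at all. Stein works directly on a special Lipschitz domain $\{x_d>\gamma(x')\}$ and, for $x$ below the graph, sets $(Ef)(x)=\int_1^{\infty} f\bigl(x',x_d+\lambda\delta^*(x)\bigr)\psi(\lambda)\ud\lambda$, where $\delta^*$ is a \emph{regularized distance} to $\partial\Omega$ (smooth off the boundary, comparable to the true distance, with $|\partial^{\alpha}\delta^*(x)|\lesssim \operatorname{dist}(x,\partial\Omega)^{1-|\alpha|}$) and $\psi$ decays rapidly with $\int_1^{\infty}\psi(\lambda)\ud\lambda=1$ and $\int_1^{\infty}\lambda^k\psi(\lambda)\ud\lambda=0$ for all $k\ge 1$. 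The vanishing moments replace your finite reflection, and the regularized distance replaces the chart, so no derivative of $\gamma$ beyond the first ever appears. The formula you quote, $\int_1^{\infty} f(x',-\lambda x_d)\,\psi(\lambda)\ud\lambda$, is the half-space shadow of this and, as written, still sits downstream of the problematic flattening; replacing $-\lambda x_d$ by $x_d+\lambda\delta^*(x)$ on the unflattened domain is the essential point. With that substitution, together with your localization to finitely many special Lipschitz domains, the argument closes and a single operator works for all $\tau$ simultaneously, as the lemma requires.
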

In this paper, $C$ will denote a generic constant.

Since we also have $\|v\|_{H^{\tau}(\Omega)} \leq \|Ev\|_{H^{\tau}(\mathbb{R}^d)}$, this means that when we need to consider the $H^{\tau}(\Omega)$ norms of the errors at each level, we can carry out our error analysis in the $H^{\tau}(\mathbb{R}^d)$-norm. This is advantageous, since we then have for $g \in H^{\tau}(\mathbb{R}^d)$
\begin{equation}
\label{eqnHtauNorm}
\|g\|_{H^{\tau}(\mathbb{R}^d)}^2 = \int\limits_{\mathbb{R}^d} |\widehat{g}(\boldsymbol{\omega})|^2 \left(1 + \|\boldsymbol{\omega}\|_2^2 \right)^{\tau} \mathrm{d}\boldsymbol{\omega},
\end{equation}
upon defining the Fourier transform as
$$\widehat{g}(\boldsymbol{\omega}) = \left(2\pi\right)^{-d/2} \int\limits_{\mathbb{R}^d} g(\boldsymbol{x}) e^{-i\mathbf{x}^T\boldsymbol{\omega}} \mathrm{d}\mathbf{x}.$$
At each level, we will also require a scaled version of the kernel $\Phi: \Omega \times \Omega \rightarrow \mathbb{R}$. For our unscaled kernel we will use a Wendland compactly supported radial basis function \citep{Wen05}. With a (level-specific) scaling parameter $\delta>0$, we can define the scaled kernels as
\begin{equation}
\Phi_{\delta}(\mathbf{x},\mathbf{y}) := \delta^{-d} \phi\left( \frac{\|\mathbf{x}-\mathbf{y}\|_2}{\delta}\right). \label{eqnDefnScaledKernel}
\end{equation}
Appropriate selection of the scaling parameters will also prove to be one of the important ingredients for convergence of our multiscale algorithms.

For the Wendland basis functions, there exist two constants $0 < c_1 \leq c_2$ such that their Fourier transforms satisfy \citep{Wen05}
\begin{equation}
\label{eqnWendlandFourierTequiv}
c_1\left(1 + \|\boldsymbol{\omega}\|_2^2 \right)^{-\tau} \leq \widehat{\Phi}(\boldsymbol{\omega}) \leq c_2 \left(1 + \|\boldsymbol{\omega}\|_2^2 \right)^{-\tau}, \quad \boldsymbol{\omega} \in \mathbb{R}^d,
\end{equation}
where $\tau = (d+2k+1)/2$ where $d$ is the spatial dimension and $k$ is the smoothness parameter. For further details, we refer the reader to \cite{Wen05}. The native space $\mathcal{N}_{\Phi}(\mathbb{R}^d)$ of $\Phi$ consists of all functions $g \in L_2(\mathbb{R}^d)$ such that
\begin{equation}
\label{eqnPhiNorm}
\|g\|_{\Phi}^2 = \int\limits_{\mathbb{R}^d} \frac{|\widehat{g}(\boldsymbol{\omega})|^2}{\widehat{\Phi}(\boldsymbol{\omega})} \mathrm{d}\boldsymbol{\omega} < \infty.
\end{equation}
Taking \eqref{eqnHtauNorm} and \eqref{eqnWendlandFourierTequiv} together shows that $\mathcal{N}_{\Phi}(\mathbb{R}^d)$ is norm-equivalent to the Sobolev space $H^{\tau}(\mathbb{R}^d)$.

Consequently the Fourier transform of $\Phi_{\delta}$, $\widehat{\Phi_{\delta}}(\boldsymbol{\omega}) = \widehat{\Phi}(\delta \boldsymbol{\omega})$, satisfies
\begin{equation}
\label{eqnScaledWendlandFourierTequiv}
c_1\left(1 + \delta^2\|\boldsymbol{\omega}\|_2^2 \right)^{-\tau} \leq \widehat{\Phi_{\delta}}(\boldsymbol{\omega}) \leq c_2 \left(1 + \delta^2\|\boldsymbol{\omega}\|_2^2 \right)^{-\tau}, \quad \boldsymbol{\omega} \in \mathbb{R}^d.
\end{equation}
We will need norm equivalence as stated in the following lemma.
\begin{lem}
\label{lemPhiDeltaHtauNormEquiv}
For every $\delta \in (0,\delta_a]$ and for all $g \in H^{\tau}(\mathbb{R}^d)$, there exist constants $0 < c_3 \leq c_4$ such that
\begin{equation*}
c_3 \|g\|_{\Phi_{\delta}} \leq \|g\|_{H^{\tau}(\mathbb{R}^d)} \leq c_4 \delta^{-\tau} \|g\|_{\Phi_{\delta}}.
\end{equation*}
\end{lem}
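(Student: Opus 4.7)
The plan is to work entirely on the Fourier side, using \eqref{eqnHtauNorm} for the $H^{\tau}(\mathbb{R}^d)$-norm and \eqref{eqnPhiNorm} for the native space norm of $\Phi_{\delta}$, and then to compare the two Fourier weights $(1+\|\boldsymbol{\omega}\|_2^2)^{\tau}$ and $1/\widehat{\Phi_{\delta}}(\boldsymbol{\omega})$ pointwise via \eqref{eqnScaledWendlandFourierTequiv}. Both inequalities will then follow by integrating against $|\widehat{g}(\boldsymbol{\omega})|^2$.

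For the lower bound, I would invert the upper half of \eqref{eqnScaledWendlandFourierTequiv} to get $1/\widehat{\Phi_{\delta}}(\boldsymbol{\omega}) \le c_1^{-1}(1+\delta^2\|\boldsymbol{\omega}\|_2^2)^{\tau}$, multiply by $|\widehat{g}|^2$, and integrate. What remains is to dominate $(1+\delta^2\|\boldsymbol{\omega}\|_2^2)^{\tau}$ by a constant multiple of $(1+\|\boldsymbol{\omega}\|_2^2)^{\tau}$, which follows from the elementary inequality $1+\delta^2\|\boldsymbol{\omega}\|_2^2 \le \max(1,\delta^2)(1+\|\boldsymbol{\omega}\|_2^2)$; since $\delta \le \delta_a$, this gives $c_3 = \sqrt{c_1}\,\max(1,\delta_a)^{-\tau}$.

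For the upper bound, I would invert the lower half of \eqref{eqnScaledWendlandFourierTequiv} to get $1/\widehat{\Phi_{\delta}}(\boldsymbol{\omega}) \ge c_2^{-1}(1+\delta^2\|\boldsymbol{\omega}\|_2^2)^{\tau}$, multiply by $\delta^{-2\tau}$ and pull it inside to obtain the clean expression $\delta^{-2\tau}(1+\delta^2\|\boldsymbol{\omega}\|_2^2)^{\tau} = (\delta^{-2}+\|\boldsymbol{\omega}\|_2^2)^{\tau}$. The task is then to bound $(1+\|\boldsymbol{\omega}\|_2^2)^{\tau}$ from above by a constant times this quantity, which reduces to checking that $\inf_{\omega \ge 0}(\delta^{-2}+\omega^2)/(1+\omega^2) \ge \min(1,\delta^{-2})$; this infimum is attained at $\omega = 0$ when $\delta > 1$ and is $1$ when $\delta \le 1$. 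Using $\delta \le \delta_a$ again yields a constant $c_4 = \sqrt{c_2}\,\max(1,\delta_a)^{\tau}$ (independent of $\delta$), and assembling gives the claimed inequality.

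I do not expect any serious obstacle here since the whole argument is essentially a pointwise comparison of two Fourier-side weights followed by integration. The only thing to watch is that $\delta_a$ is not assumed to be $\le 1$, so the constants $c_3$ and $c_4$ must be allowed to depend on $\delta_a$; the factor $\delta^{-\tau}$ in the upper bound is the genuinely $\delta$-dependent piece and comes naturally out of the rescaling of $\widehat{\Phi}$ by $\delta$.
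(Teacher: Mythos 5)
Your proposal is correct and takes essentially the same route as the paper: a pointwise comparison of the Fourier weights $(1+\|\boldsymbol{\omega}\|_2^2)^{\tau}$ and $1/\widehat{\Phi_\delta}(\boldsymbol{\omega})$ followed by integration against $|\widehat{g}|^2$, yielding the same constants $c_3=\sqrt{c_1}\,\max(1,\delta_a)^{-\tau}$ and $c_4=\sqrt{c_2}\,\max(1,\delta_a)^{\tau}$ (the paper splits into $\delta\le 1$, cited from Wendland, and $\delta>1$, whereas you handle all $\delta$ uniformly via $\max(1,\delta^2)$ and $\min(1,\delta^{-2})$, which is a harmless cosmetic difference). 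The only slip is a label swap: the bound $1/\widehat{\Phi_\delta}\le c_1^{-1}(1+\delta^2\|\boldsymbol{\omega}\|_2^2)^{\tau}$ comes from inverting the \emph{lower} half of \eqref{eqnScaledWendlandFourierTequiv} and the bound with $c_2^{-1}$ from the \emph{upper} half, not the other way around; the inequalities you actually use are the correct ones.
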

\begin{proof}
The case $\delta_a \leq 1$ was proven in \cite{Wen10} with $c_3 = c_1^{1/2}$ and $c_4 = c_2^{1/2}$. To extend this to the case where $\delta_a > 1$, note that for $\delta > 1$ we have
$$ \left(1 + \|\boldsymbol{\omega}\|_2^2 \right)^{\tau} = \delta^{-2\tau} \left(\delta^2 + \delta^2\|\boldsymbol{\omega}\|_2^2 \right)^{\tau} \geq \delta_a^{-2\tau} \left(1 + \delta^2\|\boldsymbol{\omega}\|_2^2 \right)^{\tau}.$$
Together with \eqref{eqnHtauNorm}, \eqref{eqnScaledWendlandFourierTequiv} and \eqref{eqnPhiNorm} we can see that
\begin{eqnarray*}
\|g\|_{H^{\tau}(\mathbb{R}^d)}^2 &=& \int\limits_{\mathbb{R}^d} |\widehat{g}(\boldsymbol{\omega})|^2\left(1 + \|\boldsymbol{\omega}\|_2^2 \right)^{\tau} \mathrm{d}\boldsymbol{\omega} \\
&\geq& \delta_a^{-2 \tau} \int\limits_{\mathbb{R}^d} |\widehat{g}(\boldsymbol{\omega})|^2\left(1 + \delta^2\|\boldsymbol{\omega}\|_2^2 \right)^{\tau} \mathrm{d}\boldsymbol{\omega} \\
&\geq& c_1 \delta_a^{-2\tau} \int\limits_{\mathbb{R}^d} \frac{|\widehat{g}(\boldsymbol{\omega})|^2}{\widehat{\Phi_{\delta}}(\boldsymbol{\omega})} \mathrm{d}\boldsymbol{\omega} \\
&\geq& c_1 \delta_a^{-2\tau} \|g\|_{\Phi_{\delta}}^2.
\end{eqnarray*}
For the lower bound, we can just use $\delta > 1$ directly to derive
\begin{eqnarray*}
\|g\|_{H^{\tau}(\mathbb{R}^d)}^2 &=& \int\limits_{\mathbb{R}^d} |\widehat{g}(\boldsymbol{\omega})|^2\left(1 +  \|\boldsymbol{\omega}\|_2^2 \right)^{\tau} \mathrm{d}\boldsymbol{\omega} \\
&\leq& \int\limits_{\mathbb{R}^d} |\widehat{g}(\boldsymbol{\omega})|^2\left(1 + \delta^2\|\boldsymbol{\omega}\|_2^2 \right)^{\tau} \mathrm{d}\boldsymbol{\omega} \\
&\leq& c_2\|g\|_{\Phi_{\delta}}^2,
\end{eqnarray*}
using \eqref{eqnScaledWendlandFourierTequiv} and \eqref{eqnPhiNorm}. Then setting $c_3 := c_1^{1/2} \min(1,\delta_a^{-\tau})$ and $c_4 := c_2^{1/2} \max(1,\delta_a^{\tau})$ completes the proof. \hspace*{\fill} \qed
\end{proof}

\section{PDEs with Neumann and/or Robin boundary conditions} \label{SectionPDEsNoDirchlt}

In this section, we consider a second order PDE which has homogeneous Neumann and/or Robin boundary conditions. For example, such a PDE with Neumann boundary conditions is given by
\begin{subequations}
\label{eqnPDEnonDirchlt}
\begin{align}
\mathcal{L} u &= f \quad \mbox{ in } \Omega,  \\
\frac{\partial u}{\partial \mathbf{n}} &= 0 \quad \mbox{ on } \partial \Omega,
\end{align}
\end{subequations}
where $\mathcal{L}$ is a second order elliptic differential operator, $\mathbf{n}$ denotes the outward unit normal vector and $\partial \Omega$ denotes the boundary.
The weak formulation is given by
\begin{equation}
a(u,v) = \left\langle f,v \right\rangle_{L_2(\Omega)} \quad \forall \, v \in V, \label{eqnWeakFormPDE}
\end{equation}
where $V = H^1(\Omega)$. We assume that $\mathcal{L}$ and $f$ are such that $a(u,v)$ is a strictly coercive and continuous bilinear form defined on $V \times V$ and $\left\langle f,v \right\rangle_{L_2(\Omega)}$ is a continuous linear form defined on $V$. By the Lax-Milgram theorem, \eqref{eqnWeakFormPDE} has a unique solution $u \in V$. We will also require $u \in H^{2}(\Omega)$ with spatial dimension $d \leq 3$.

Galerkin approximation seeks to solve \eqref{eqnWeakFormPDE} with a finite dimensional subspace $V_N\subseteq V$. In other words, the Galerkin approximation $\widetilde{u}_N$ is the solution of
\begin{equation}
\widetilde{u}_N \in V_N: a(\widetilde{u}_N,v) = \left\langle f,v \right\rangle_{L_2(\Omega)} \quad \forall \, v \in V_N. \label{eqnGalerkin}
\end{equation}
We will consider $\Omega$ to be a bounded domain with a Lipschitz boundary, which means that we can apply the extension operator to use norms in $\mathbb{R}^d$ as explained in the introduction. For further information on weak formulation of PDEs and Galerkin approximation, we refer the reader to \cite{BreS08}.

Since the PDE does not have Dirichlet boundary conditions, we can use the entire Sobolev space $H^{1}(\Omega)$ rather than the subspace $\mathring{H}^{1}(\Omega)$ consisting of functions with zero boundary values which can occur with pure Dirichlet boundary conditions.

We will consider finite dimensional subspaces $V_N \subseteq V$ of the form
\begin{equation*}
V_N := \mbox{span}\left\{\Phi\left(\cdot,\mathbf{x}_j\right): 1\leq j \leq N \right\}
\end{equation*}
where $\Phi: \mathbb{R}^d \rightarrow \mathbb{R}$ is at least a $C^1$-function and there are $N$ centres $\{\mathbf{x}_j: 1\leq j \leq N\}$. In this case our approximation takes the form
\begin{equation*}
\widetilde{u}_N = \sum_{j=1}^N c_j \Phi(\cdot, \mathbf{x}_j),
\end{equation*}
and the weak formulation with this approximation given by
$$a(\widetilde{u}_N,v) = \left\langle f,v \right\rangle_{L_2(\Omega)} \quad \forall \, v \in V_N,$$ results in a linear system $$\mathbf{A}\mathbf{c} = \mathbf{f}$$ where the entries of the \textit{stiffness matrix} are given by
\begin{equation}
A_{ij} = a(\Phi(\cdot,\mathbf{x}_i),\Phi(\cdot,\mathbf{x}_j)) \label{eqnAgalerkin}
\end{equation}
 and $$\mathbf{f}_i = \int\limits_{\Omega} f(\mathbf{x})   \Phi(\mathbf{x},\mathbf{x}_i) \, \mathrm{d}\mathbf{x}.$$

We have the following result from \cite{Wen99}.
\begin{thm}
\label{thmGalerkinErrorBoundH1_H1}
If $u \in H^{2}(\Omega), d \leq 3,$ is the solution to the variational problem \eqref{eqnWeakFormPDE} and $\widetilde{u}_N \in V_N$ is the solution of \eqref{eqnGalerkin}, where $V_N$ is generated with $X$ satisfying $h\leq h_0$ for $h_0$ small enough and fixed, then the error can be bounded by
$$ \| u-\widetilde{u}_N\|_{H^1(\Omega)} \leq C h\|u\|_{H^{2}(\Omega)}.$$
\end{thm}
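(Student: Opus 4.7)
The plan is to combine the classical Céa lemma with a standard RBF approximation estimate. Since $a(\cdot,\cdot)$ is assumed to be continuous and strictly coercive on $V = H^1(\Omega)$, the Galerkin orthogonality $a(u-\widetilde{u}_N,v)=0$ for all $v\in V_N$ together with coercivity and continuity (with constants $\alpha$ and $M$, say) gives Céa's lemma:
$$\|u - \widetilde{u}_N\|_{H^1(\Omega)} \leq \frac{M}{\alpha}\, \inf_{v \in V_N} \|u - v\|_{H^1(\Omega)}.$$
This reduces the Galerkin error estimate to a question about how well the RBF trial space approximates $u$ in the $H^1$-norm.

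Next, I would realise the infimum by taking $v$ to be the (unscaled) RBF interpolant $I_X u \in V_N$ of $u$ at the centre set $X$. Since the Wendland kernel satisfies \eqref{eqnWendlandFourierTequiv} with $\tau = (d+2k+1)/2$, its native space is norm-equivalent to $H^\tau(\mathbb{R}^d)$. Choosing the smoothness parameter $k$ so that $\tau \geq 2$, one invokes the classical RBF interpolation estimate on bounded Lipschitz domains (Wendland 2005) to obtain
$$\|u - I_X u\|_{H^1(\Omega)} \leq C\, h_{X,\Omega}^{\,\tau - 1}\, \|u\|_{H^\tau(\Omega)}.$$
Using Lemma \ref{lemExtensionOperator} to transfer between $\Omega$ and $\mathbb{R}^d$, and an \emph{escape-type} (Sobolev-bound) refinement so that $u$ needs only live in $H^{2}(\Omega)$ rather than in the full native space, this specialises for $u\in H^{2}(\Omega)$ to
$$\|u - I_X u\|_{H^1(\Omega)} \leq C\, h\, \|u\|_{H^2(\Omega)},$$
provided $h\leq h_0$ is small enough. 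Inserting this into Céa's lemma completes the proof.

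The main obstacle is this last approximation step: one has to bound a Sobolev-type $H^1$ approximation error for a function that is \emph{not} a priori in the native space of $\Phi$. The assumptions $d\leq 3$ and $h\leq h_0$ enter precisely here: the former guarantees the embedding $H^2(\Omega)\hookrightarrow C(\overline{\Omega})$ so that pointwise interpolation at the centres is meaningful, while the latter is what is needed for the sampling/Markov inequalities underlying the escape theorem to hold uniformly in $X$. Once the approximation bound is in hand, the coupling through Céa's lemma is entirely routine.
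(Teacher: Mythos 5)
The paper gives no proof of this theorem: it is quoted directly from \cite{Wen99}, so there is nothing internal to compare against. Your argument --- C\'ea's lemma to reduce to best approximation in $V_N$, then the RBF interpolant together with the native-space equivalence $\mathcal{N}_\Phi \cong H^\tau(\mathbb{R}^d)$ and an ``escape''-type estimate to handle $u \in H^2(\Omega)$ with $2 < \tau$ --- is precisely the standard route by which this result is established in the cited literature, and your identification of where $d\leq 3$ and $h \leq h_0$ enter is correct. The only ingredient you leave implicit is that the escape theorem carries a mesh-ratio factor $(h_{X,\Omega}/q_X)^{\tau-2}$, so the constant $C$ is uniform only for quasi-uniform centre sets; the paper's statement is equally silent on this, so it is a shared, not a new, gap.
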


\begin{lem}
\label{lemGalerkinL2errorBound}
Consider the adjoint variational problem
\begin{equation}
a(v,u) = \left\langle f,v \right\rangle_{L_2(\Omega)} \quad \forall \, v \in V. \label{eqnGalerkinAdjoint}
\end{equation}
If the solution $u$ satisfies the regularity estimate
\begin{equation}
\|u\|_{H^2(\Omega)} \leq C\|f\|_{L_2(\Omega)}
\label{eqnRegEstimateL2}
\end{equation}
for both the variational problem \eqref{eqnWeakFormPDE} and the adjoint variational problem \eqref{eqnGalerkinAdjoint}, then we have the following error bound
$$ \|u-\widetilde{u}_N\|_{L_2(\Omega)} \leq C h \|u-\widetilde{u}_N\|_{H^1(\Omega)}\leq C h^{2} \|u\|_{H^{2}(\Omega)}.$$
\end{lem}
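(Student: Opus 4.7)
The plan is to apply the classical Aubin--Nitsche duality trick, using the adjoint problem to convert the $H^1$ bound of Theorem~\ref{thmGalerkinErrorBoundH1_H1} into an $L_2$ bound with an extra factor of $h$. The second inequality in the statement is then an immediate reapplication of Theorem~\ref{thmGalerkinErrorBoundH1_H1} to $u-\widetilde{u}_N$, so the real work is establishing $\|u-\widetilde{u}_N\|_{L_2(\Omega)} \leq Ch\|u-\widetilde{u}_N\|_{H^1(\Omega)}$.

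First I would set $e := u-\widetilde{u}_N$ and introduce the auxiliary adjoint problem: find $w \in V$ such that
\begin{equation*}
a(v,w) = \langle e, v\rangle_{L_2(\Omega)} \quad \forall\, v \in V.
\end{equation*}
By hypothesis \eqref{eqnRegEstimateL2} applied to this adjoint problem (with data $e \in L_2(\Omega)$), the solution satisfies $\|w\|_{H^2(\Omega)} \leq C\|e\|_{L_2(\Omega)}$. Taking $v = e$ in the adjoint variational equation yields
\begin{equation*}
\|e\|_{L_2(\Omega)}^2 = a(e,w).
\end{equation*}

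Next I would invoke Galerkin orthogonality: subtracting \eqref{eqnGalerkin} from \eqref{eqnWeakFormPDE} gives $a(e,v_N) = 0$ for every $v_N \in V_N$. Hence for any $\widetilde{w}_N \in V_N$ we may write $a(e,w) = a(e, w - \widetilde{w}_N)$. I would choose $\widetilde{w}_N$ to be the Galerkin approximation of $w$ from $V_N$, so that Theorem~\ref{thmGalerkinErrorBoundH1_H1} applied to $w$ yields $\|w - \widetilde{w}_N\|_{H^1(\Omega)} \leq Ch\|w\|_{H^2(\Omega)}$. Continuity of the bilinear form on $V\times V$ then gives
\begin{equation*}
\|e\|_{L_2(\Omega)}^2 = a(e, w - \widetilde{w}_N) \leq C\|e\|_{H^1(\Omega)}\|w-\widetilde{w}_N\|_{H^1(\Omega)} \leq Ch\|e\|_{H^1(\Omega)}\|w\|_{H^2(\Omega)},
\end{equation*}
and the regularity estimate for $w$ lets me replace $\|w\|_{H^2(\Omega)}$ by $C\|e\|_{L_2(\Omega)}$. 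Cancelling one power of $\|e\|_{L_2(\Omega)}$ gives the first inequality, and Theorem~\ref{thmGalerkinErrorBoundH1_H1} applied once more yields the second.

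The main obstacle I anticipate is ensuring that the subspace $V_N$ really does admit a Galerkin approximation $\widetilde{w}_N$ for which Theorem~\ref{thmGalerkinErrorBoundH1_H1} applies with the same fill distance $h$; since $V_N$ in our setting is generated by the same RBF centre set $X$ and $w\in H^2(\Omega)$ with $d\leq 3$, the hypotheses of Theorem~\ref{thmGalerkinErrorBoundH1_H1} carry over verbatim, so this is not an essential difficulty but does need to be stated. Everything else (Galerkin orthogonality, continuity, and cancellation) is routine.
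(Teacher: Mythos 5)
Your proposal is correct and follows essentially the same route as the paper: the Aubin--Nitsche duality argument with the auxiliary adjoint problem $a(v,w)=\langle u-\widetilde{u}_N,v\rangle_{L_2(\Omega)}$, Galerkin orthogonality to replace $w$ by $w-\widetilde{w}_N$, continuity of $a$, Theorem~\ref{thmGalerkinErrorBoundH1_H1} applied to $w$, and the regularity estimate \eqref{eqnRegEstimateL2} to cancel one factor of $\|e\|_{L_2(\Omega)}$. The paper's proof (modelled on Brenner--Scott, Theorem 5.7.6) is identical in all essentials, including the final reapplication of Theorem~\ref{thmGalerkinErrorBoundH1_H1} for the second inequality.
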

\begin{proof}
We follow a duality argument as in \cite[Theorem 5.7.6]{BreS08}. With $\widetilde{u}_N$ as defined in \eqref{eqnGalerkin}, let $w$ be the solution to the adjoint problem
\begin{equation*}
a(v,w) = \left\langle u - \widetilde{u}_N,v \right\rangle_{L_2(\Omega)} \quad \forall \, v \in V,
\end{equation*}
and let the Galerkin approximation be given by $\widetilde{w}$. Then since the bilinear form $a(\cdot,\cdot)$ is bounded, $u \in H^2(\Omega)$, and with Theorem \ref{thmGalerkinErrorBoundH1_H1} we have
\begin{eqnarray*}
\|u-\widetilde{u}_N\|_{L_2(\Omega)}^2 &=& \left\langle u-\widetilde{u}_N, u-\widetilde{u}_N \right\rangle_{L_2(\Omega)} \\
&=& a(u-\widetilde{u}_N,w) = a(u-\widetilde{u}_N,w-\widetilde{w}) \\
&\leq& C\|u-\widetilde{u}_N\|_{H^1(\Omega)} \, \|w-\widetilde{w}\|_{H^1(\Omega)} \\
&\leq& Ch\|u-\widetilde{u}_N\|_{H^1(\Omega)} \, \|w\|_{H^2(\Omega)} \\
&\leq& Ch\|u-\widetilde{u}_N\|_{H^1(\Omega)} \, \|u-\widetilde{u}_N\|_{L_2(\Omega)},
\end{eqnarray*}
where in the second last line, we use the regularity estimate \eqref{eqnRegEstimateL2} and the result follows with another application of Theorem \ref{thmGalerkinErrorBoundH1_H1}. \hspace*{\fill} \qed
\end{proof}
We note that \eqref{eqnRegEstimateL2} is known to hold \cite[p.139]{BreS08}
 \begin{itemize}
 \item if $\Omega$ has a smooth boundary and the problem has pure Dirichlet or pure Neumann boundary conditions;
 \item if $d=2$ and $\Omega$ is convex and the problem has pure Dirichlet or pure Neumann boundary conditions.
 \end{itemize}

\section{PDEs with Dirichlet boundary conditions} \label{SectionPDEsDirchlt}

In this section we will consider a PDE with Dirichlet boundary conditions and we seek error estimates similar to those in the previous section. For ease of description, we will consider the following boundary value problem
\begin{subequations}
\label{eqnPDEdirchlt}
\begin{align}
-\Delta u &= f \quad \mbox{ in } \Omega,  \\
u &= g \quad \mbox{ on } \partial \Omega,
\end{align}
\end{subequations}
where $\Delta$ denotes the Laplacian in $\mathbb{R}^d$.

\cite{Nit70} proposed minimising the functional $J(v-u)$ where

\begin{equation*}
J(w) := \int\limits_{\Omega} \vert \nabla w \vert^2 - 2\int\limits_{\partial \Omega}w \left(\nabla w \cdot \mathbf{n} \right) + \beta_N \int\limits_{\partial \Omega} w^2,
\end{equation*}
for all $v \in V_N$ with $u$ being the solution of \eqref{eqnPDEdirchlt} and where $\mathbf{n}$ denotes the outward unit normal vector and $\nabla$ the gradient operator. The parameter $\beta_N >0$ depends only on the subspace $V_N$. The approximation $\widetilde{u}_N$ is given by $J(\widetilde{u}_N - u) := \inf_{v \in V_N} J(v-u).$ With $f$ and $g$ from \eqref{eqnPDEdirchlt}, we can compute $\widetilde{u}_N$ since
 \begin{equation*}
 J(v-u) = J(v) + J(u) - 2 \left( \int\limits_{\Omega} fv + \int\limits_{\partial \Omega} g \left( \beta_N v - \nabla v \cdot \mathbf{n} \right) \right).
 \end{equation*}
Then the variational form to approximate \eqref{eqnPDEdirchlt} becomes: find $\widetilde{u}_N \in V_N$ such that for all $v \in V_N$
\begin{equation}
a_D(\widetilde{u}_N,v) = \ell_D(v),
\end{equation}
where we use the subscript $D$ to denote the Dirichlet boundary conditions and
\begin{subequations}
\label{eqnNitsche}
\begin{align}
a_D(u,v) &:= \int\limits_{\Omega} \nabla u \cdot \nabla v - \int\limits_{\partial \Omega} v \left(\nabla u \cdot \mathbf{n} \right) - \int\limits_{\partial \Omega} u \left(\nabla v \cdot \mathbf{n} \right)
+ \beta_N \int\limits_{\partial \Omega} uv \\
\ell_D(v) &:= \int\limits_{\Omega} fv - \int\limits_{\partial \Omega} g \left(\nabla v \cdot \mathbf{n} \right) + \beta_N \int\limits_{\partial \Omega} vg.
\end{align}
\end{subequations} It can be shown that the variational form using Nitsche's method leads to variational consistency, in the sense that if $u$ is sufficiently regular, then \cite[p. 119]{BloCS03}
\begin{equation*}
a_D(u,v)= \ell_D(v), \quad \forall \,\,v \in V_N.
\end{equation*}
If there exists a positive constant $C_N$ such that
\begin{equation}
\label{eqnNitscheBound}
\| \nabla v \cdot \mathbf{n} \|_{L_2(\partial \Omega)} \leq \frac{C_N}{\sqrt{\delta}} \| \nabla v \|_{L_2(\Omega)} \quad \forall \, v \, \in V_N ,
\end{equation}
with $\delta$ being the support of the radial basis functions, then selecting
\begin{equation}
\label{eqnNitscheBeta}
\beta_N = \frac{c_5}{\delta}
\end{equation}
with $c_5 > 2C_N^2$ will ensure that the bilinear form $a_D(\cdot,\cdot)$ is symmetric positive definite. This choice of $c_5$ will also ensure that $a_D$ is coercive since
\begin{eqnarray*}
a_D(v,v) &=& \| \nabla v \|^2_{L_2(\Omega)} - 2\int\limits_{\partial \Omega}v \left(\nabla v \cdot \mathbf{n} \right) + \beta_N \|v\|^2_{L_2(\partial \Omega)} \\
&\geq& \| \nabla v \|^2_{L_2(\Omega)} - \frac{2C_N}{\delta^{1/2}} \|v\|_{L_2(\partial \Omega)} \|\nabla v\|_{L_2(\Omega)} + \beta_N \|v\|^2_{L_2(\partial \Omega)} \\
&\geq& \frac{1}{2} \| \nabla v \|^2_{L_2(\Omega)} + \left(\beta_N - \frac{2C_N^2}{\delta} \right)\|v\|^2_{L_2(\partial \Omega)} \\
&\geq& C \|v\|^2_{H^1(\Omega)}, \forall \, v \, \in V_N
\end{eqnarray*}
where we have used the Cauchy-Schwarz and Friedrichs inequalities, \eqref{eqnNitscheBound} and that $2xy \leq x^2 + y^2$. We recall that the Friedrichs inequality \citep{Maz11} states that
 \begin{equation*}
 \int\limits_{\Omega} \vert u \vert^2 \leq C \left(\int\limits_{\Omega} \vert \nabla u \vert^2 + \int\limits_{\partial \Omega} \vert u \vert^2 \right),
 \end{equation*}
 for $\Omega$ being a bounded domain for which the Gauss-Green formula holds.

Continuity follows since
\begin{eqnarray*}
\vert a_D(u,v) \vert &\leq& \vert u \vert_{H^1(\Omega)} \vert v \vert_{H^1(\Omega)} + C_N/\delta \left( \|v\|_{L_2(\partial \Omega)}\|\nabla u\|_{L_2(\Omega)} + \right. \\
&& \left. \|u\|_{L_2(\partial \Omega)}\|\nabla v\|_{L_2(\Omega)} \right) + \beta_N \|u\|_{L_2(\partial \Omega)} \|v\|_{L_2(\partial \Omega)} \\
&\leq& \vert u \vert_{H^1(\Omega)} \vert v \vert_{H^1(\Omega)} + C \left( \|v\|_{L_2(\Omega)}\|\nabla u\|_{L_2(\Omega)} + \right. \\
&& \left. \|u\|_{L_2(\Omega)}\|\nabla v\|_{L_2(\Omega)} \right) + \beta_N \|u\|_{L_2(\Omega)} \|v\|_{L_2(\Omega)} \\
&\leq& C \| u \|_{H^1(\Omega)} \| v \|_{H^1(\Omega)}, \quad \forall \, u,v \, \in V_N
\end{eqnarray*}
where we have used the Cauchy-Schwarz inequality, \eqref{eqnNitscheBound} and the Sobolev trace embedding theorem.

Nitsche also proved that the optimal error estimates of Theorem \ref{thmGalerkinErrorBoundH1_H1} and Lemma \ref{lemGalerkinL2errorBound} hold in this setting if, in addition to the requirement of selecting $\beta_N$ satisfying \eqref{eqnNitscheBeta}, there exists a $s_u \in V_N$ such that for $u \in H^2(\Omega)$, the following error bounds hold for $k \in \{0,1\}$
\begin{subequations}
\label{eqnNitscheReqErrorBounds}
\begin{align}
\|u-s_u\|_{H^k(\Omega)} \leq C h^{2-k} \|u\|_{H^2(\Omega)}, \\
\|u-s_u\|_{H^k(\partial \Omega)} \leq C h^{3/2-k} \|u\|_{H^2(\Omega)}.
\end{align}
\end{subequations}
With our choice of compactly supported radial basis functions, this requirement is known to hold \citep{Wen99}.

Note that the most challenging aspect of Nitsche's method is the derivation of the weak form and the selection of the stabilisation parameter $\beta_N$. Both the weak form and the choice of the parameter $\beta_N$ depend on the PDE as well as the Dirichlet boundary conditions.

\section{Multiscale Galerkin approximation} \label{sectionMLgalerkin}

In this section, we consider a multiscale algorithm for constructing a Galerkin approximation where we use the residual from the previous level as the target for each subsequent level. We define the approximation at level $i$ as $\widetilde{u}_i := \widetilde{u}_{N_i}$ with centres $N_i$ and the approximation space at level $i$ as $V_i := V_{N_i}$. The algorithm is given in Algorithm \ref{AlgGalerkin}. The bilinear form $a(\cdot,\cdot)$ used in this algorithm is the unmodified bilinear form in the case of a PDE with Neumann or Robin boundary conditions and the Nitsche's method bilinear form $a_D(\cdot,\cdot)$ in the case of a PDE with Dirichlet boundary conditions.

\begin{algorithm}[!htbp]
\DontPrintSemicolon
{\normalsize
\KwData{\hspace{0.45in} $n$: number of levels \\ \hspace{0.45in} $\{X_i\}_{i=1}^{n}$: the set of nested centres for each level $i$, with mesh \\ \hspace{0.45in} norms at each level given by $h_i$ satisfying $c \mu h_i \leq h_{i+1} \leq \mu h_i$ \\ \hspace{0.45in} with fixed $\mu \in (0,1), c \in (0,1]$ and $h_1$ sufficiently small \\ \hspace{0.45in} $\{\delta_i\}_{i=1}^n:$ the scale parameters to use at each level, \hspace{0.45in} \\ \hspace{0.45in} satisfying $\delta_i = \nu h_i$, $\nu$ a fixed constant.}

\Begin{
Set $\widetilde{u}_0=0$ \\
\For{$i =1,2,\ldots,n$}{
With the level-specific approximation subspace $V_i := \mbox{span}\left\{\Phi_{\delta_i}(\cdot,\mathbf{x}), \mathbf{x} \in X_i\right\}$ solve the Galerkin approximation given by
\begin{equation*}
\mbox{Find } s_i \in V_i \, : \, a(s_i,v) = \left\langle f,v \right\rangle_{L_2(\Omega)} - a(\widetilde{u}_{i-1},v) \, \, \forall \, v \in V_i
\end{equation*}
Update the solution according to
\begin{eqnarray*}
\widetilde{u}_i &=& \widetilde{u}_{i-1} + s_i \\
\end{eqnarray*}
}
}
\KwResult{Approximate solution at level $n$, $\widetilde{u}_n$ \\ \hspace{0.45in} The error at level $n$, $e_n := u - \widetilde{u}_n$.}
}
\caption{Multiscale Galerkin approximation \label{AlgGalerkin}}
\end{algorithm}

The algorithm as stated uses the same bilinear form at each level and it is the approximation space $V_i$ which changes. However the Nitsche's method bilinear form $a_D(\cdot,\cdot)$ will vary at each level since the value of $C_N^2$ is proportional to $\delta^{-1}$ and hence so is $\beta_N$. This means that we will need to select the value of $\beta_N$ corresponding to the last level and to use this for all previous levels. This will also mean that we will need to know the number of levels in advance.

Henceforth we will simply refer to the bilinear form as $a(\cdot,\cdot)$. This should cause no confusion as we have the same error bounds in both cases, as well as coercivity and continuity, and the multiscale algorithm follows the same steps in both cases. We require one more lemma before we can analyse the convergence of the multiscale algorithm.

\begin{lem}
\label{lemGalerkinInterStageErrorBound}
Let $\Omega \subseteq \mathbb{R}^d$ be a bounded domain with a Lipschitz boundary. Then for Algorithm \ref{AlgGalerkin} and for a given level $i>1$, we have the following bound on the $H^1(\Omega)$ error between subsequent levels.
$$\|e_i\|_{H^{1}(\Omega)} \leq C \|e_{i-1}\|_{H^1(\Omega)},$$
where $e_i$ is defined in Algorithm \ref{AlgGalerkin}.
\end{lem}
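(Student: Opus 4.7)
The plan is to identify $s_i$ as an ordinary Galerkin approximation to the residual $e_{i-1}$ and then invoke Céa's lemma. The key observation is that the defining equation for $s_i$,
\begin{equation*}
a(s_i,v) = \langle f,v\rangle_{L_2(\Omega)} - a(\widetilde{u}_{i-1},v) \quad \forall v \in V_i,
\end{equation*}
combined with the (Nitsche-consistent, in the Dirichlet case) identity $a(u,v) = \langle f,v\rangle_{L_2(\Omega)}$ valid for all $v \in V_i$, yields $a(e_{i-1} - s_i, v) = 0$ for every $v \in V_i$. Since $e_i = u - \widetilde{u}_i = e_{i-1} - s_i$, this is exactly Galerkin orthogonality: $a(e_i,v) = 0$ for all $v\in V_i$.

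Next, I would apply coercivity and continuity of $a(\cdot,\cdot)$ on $H^1(\Omega)$, established in Sections~\ref{SectionPDEsNoDirchlt} and~\ref{SectionPDEsDirchlt} (with constants independent of $\delta_i$, since as noted in the paper $\beta_N$ is fixed based on the finest level). For any $v \in V_i$, using Galerkin orthogonality with $s_i - v \in V_i$,
\begin{equation*}
\alpha \|e_i\|_{H^1(\Omega)}^2 \leq a(e_i,e_i) = a(e_i, e_{i-1} - v) \leq M \|e_i\|_{H^1(\Omega)} \, \|e_{i-1}-v\|_{H^1(\Omega)},
\end{equation*}
where $\alpha$ and $M$ are the coercivity and continuity constants. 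Cancelling one factor of $\|e_i\|_{H^1(\Omega)}$ and taking the infimum over $v \in V_i$ gives the Céa estimate $\|e_i\|_{H^1(\Omega)} \leq (M/\alpha)\inf_{v \in V_i}\|e_{i-1}-v\|_{H^1(\Omega)}$. Choosing the admissible candidate $v = 0 \in V_i$ then yields
\begin{equation*}
\|e_i\|_{H^1(\Omega)} \leq \frac{M}{\alpha} \|e_{i-1}\|_{H^1(\Omega)},
\end{equation*}
which is the claimed inequality with $C = M/\alpha$.

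The only subtlety I expect is the Dirichlet case, where the bilinear form $a_D(\cdot,\cdot)$ incorporates the Nitsche penalty through $\beta_N = c_5/\delta$. Since $\beta_N$ varies with $\delta$, one must freeze $\delta$ at the value corresponding to the final level $n$ (as the paper prescribes), so that the coercivity and continuity constants $\alpha$ and $M$ derived in Section~\ref{SectionPDEsDirchlt} can be used uniformly across all levels $i \leq n$. Once this is fixed, the argument above goes through in both the Neumann/Robin and Dirichlet settings without further modification.
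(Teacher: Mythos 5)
Your proof is correct, and its first half is identical to the paper's: both establish that $s_i$ is the Galerkin approximation of $e_{i-1}$ in $V_i$, i.e.\ the orthogonality $a(e_{i-1}-s_i,v)=a(e_i,v)=0$ for $v\in V_i$, and both then finish with coercivity and continuity. The only difference is the last step. The paper sets $w=s_i$ to get $a(e_i,s_i)=0$, deduces the energy identity $a(e_i,e_i)=a(e_{i-1},e_{i-1})-a(s_i,s_i)$, and then applies coercivity on the left and continuity on the right; you instead run the standard C\'ea argument and take the admissible candidate $v=0$. The two finishes cost the same, but yours is marginally more robust: the paper's identity requires $a(s_i,e_i)=0$ as well as $a(e_i,s_i)=0$, i.e.\ it implicitly uses symmetry of $a(\cdot,\cdot)$ (true for $a_D$ and for the model Helmholtz problem, but not assumed for a general coercive continuous bilinear form, for which the paper elsewhere invokes an adjoint problem), whereas C\'ea's lemma needs only one-sided Galerkin orthogonality. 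The paper's version does yield the small bonus $\|s_i\|_{H^1(\Omega)}^2 \leq C\|e_{i-1}\|_{H^1(\Omega)}^2$ alongside the stated bound, but that is not used later. Your remark about freezing $\beta_N$ at the finest level so that the coercivity and continuity constants are level-independent matches the paper's own prescription.
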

\begin{proof}
We will firstly show that $s_i$ is the Galerkin approximation of $e_{i-1}$. We have
\begin{eqnarray*}
a(s_i,w) &=& \left\langle f,w \right\rangle_{L_2(\Omega)} - a(\widetilde{u}_{i-1},w), \,\, w \in V_i \\
&=& a(u,w) - a(\widetilde{u}_{i-1},w) \\
&=& a(u-\widetilde{u}_{i-1},w) \\
&=& a(e_{i-1},w),
\end{eqnarray*}
where we have used the variational form of the PDE and the linearity in the first argument of the bilinear form $a(\cdot,\cdot)$. Hence on setting $w = s_i$ we obtain
\[
a(e_{i-1}-s_i,s_i)=0.
\]
Upon noting that $e_i = e_{i-1}-s_i$, it follows easily that
\[
a(e_i,e_i) = a(e_{i-1},e_{i-1})-a(s_i,s_i),
\]
and since the bilinear form $a$ is continuous and coercive
\[
\|e_i\|_{H^1(\Omega)}^2 + \|s_i\|_{H^1(\Omega)}^2 \leq C \|e_{i-1}\|_{H^1(\Omega)}^2,
\]
from which the result follows. \hspace*{\fill} \qed
\end{proof}

The following theorem and corollaries are our main results for the convergence of the multiscale Galerkin approximation. For the error analysis, we will need the norm
\begin{equation*}
\|u\|_{\Psi_j}^2 := \int\limits_{\mathbb{R}^d} |\widehat{u}(\boldsymbol{\omega})|^2 \left(1 + \delta_j^2 \|\boldsymbol{\omega}\|_2^2 \right) \mathrm{d}\boldsymbol{\omega}.
\end{equation*}
As in Lemma \ref{lemPhiDeltaHtauNormEquiv}, this norm satisfies
\begin{equation}
\label{eqnPsiEquivalance}
c_7 \|u\|_{\Psi_j} \leq \|u\|_{H^{1}(\mathbb{R}^d)} \leq c_8 \delta_j^{-1} \|u\|_{\Psi_{j}}.
\end{equation}

\begin{thm}
\label{thmMLGalerkinConvergence}
Let $\Omega \subseteq \mathbb{R}^d$ be a bounded domain with a Lipschitz boundary. Then for Algorithm \ref{AlgGalerkin} there exists a constant $\alpha_1>0$ such that
$$ \|Ee_i\|_{\Psi_{i+1}} \leq \alpha_1 \|Ee_{i-1}\|_{\Psi_i} \quad \mbox{for} \hspace{0.1in} i = 1,2,\ldots $$
where $Ee_i$ is the extension operator defined in Lemma \ref{lemExtensionOperator} applied to $e_i$. The constant $\alpha_1$ satisfies $\alpha_1<1$ if in Algorithm \ref{AlgGalerkin} $\nu$ is sufficiently small and $\mu$ is sufficiently large.
\end{thm}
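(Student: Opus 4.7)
The plan is to reduce the $\Psi_{i+1}$-estimate to standard Sobolev norms on $\Omega$ via the extension operator of Lemma \ref{lemExtensionOperator} and the norm equivalence \eqref{eqnPsiEquivalance}, apply the Galerkin error bounds of Section \ref{SectionPDEsNoDirchlt} at level $i$, and combine everything with the scalings $\delta_i=\nu h_i$ and $\delta_{i+1}\leq \mu \delta_i$ to produce a contraction.

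First I would note, exactly as in the proof of Lemma \ref{lemGalerkinInterStageErrorBound}, that $a(e_i,v)=0$ for every $v\in V_i$, i.e., $s_i$ is itself the Galerkin approximation of the target $e_{i-1}$ in the subspace $V_i$. Provided $e_{i-1}\in H^{2}(\Omega)$, which follows from $u\in H^{2}(\Omega)$ and the fact that each $\widetilde u_{i-1}$ is a smooth sum of scaled Wendland kernels, Theorem \ref{thmGalerkinErrorBoundH1_H1} and Lemma \ref{lemGalerkinL2errorBound} applied with target $e_{i-1}$ and approximation $s_i$ give
\begin{equation*}
\|e_i\|_{L^2(\Omega)}\leq C h_i^2\|e_{i-1}\|_{H^2(\Omega)},\qquad \|e_i\|_{H^1(\Omega)}\leq C h_i\|e_{i-1}\|_{H^2(\Omega)}.
\end{equation*}

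Next, Parseval's identity gives $\|Ee_i\|_{\Psi_{i+1}}^2 = \|Ee_i\|_{L^2(\mathbb{R}^d)}^2+\delta_{i+1}^2\|\nabla Ee_i\|_{L^2(\mathbb{R}^d)}^2$, so combining the extension stability from Lemma \ref{lemExtensionOperator} with the two Galerkin bounds above yields
\begin{equation*}
\|Ee_i\|_{\Psi_{i+1}}^2 \leq C\bigl(h_i^4+\delta_{i+1}^2 h_i^2\bigr)\|e_{i-1}\|_{H^2(\Omega)}^2.
\end{equation*}
Using $\delta_{i+1}\leq \mu\nu h_i$ collapses the parenthesis to $C h_i^4(1+\mu^2\nu^2)$, so the estimate becomes $\|Ee_i\|_{\Psi_{i+1}} \leq C h_i^2\sqrt{1+\mu^2\nu^2}\,\|e_{i-1}\|_{H^2(\Omega)}$.

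The main obstacle is the final step: bounding $\|e_{i-1}\|_{H^2(\Omega)}$ by $\|Ee_{i-1}\|_{\Psi_i}$. Because $\Psi_i$ is an $H^1$-scale norm whereas the $H^2$-norm requires control of second derivatives, a genuine inverse/Bernstein-type inequality is needed here. The argument must exploit the Fourier characterisations \eqref{eqnHtauNorm} and \eqref{eqnScaledWendlandFourierTequiv} together with the decomposition $e_{i-1}=u-\widetilde u_{i-1}$, controlling $u$ through its $H^2$-regularity and the high-frequency content of $\widetilde u_{i-1}$ via the native-space norm equivalence of Lemma \ref{lemPhiDeltaHtauNormEquiv}, in order to produce the factor $h_i^{-2}$ (or equivalently a power of $\delta_i^{-1}$) that cancels the $h_i^{2}$ extracted above. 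Once this inverse estimate is in place, the scalings $\delta_i=\nu h_i$ and $\delta_{i+1}\leq \mu\delta_i$ yield $\|Ee_i\|_{\Psi_{i+1}} \leq \alpha_1 \|Ee_{i-1}\|_{\Psi_i}$ with $\alpha_1$ depending only on the fixed constants $C,\nu,\mu$; the contractive regime $\alpha_1<1$ is then reached by balancing $\nu$ and $\mu$ so that the residual factor falls below one.
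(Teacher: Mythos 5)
Your overall architecture (Galerkin orthogonality, the Aubin--Nitsche duality gain, a low/high-frequency split of the $\Psi_{i+1}$-norm, and the scalings $\delta_i=\nu h_i$, $\delta_{i+1}\leq\mu\delta_i$) matches the paper's, but there is a genuine gap at exactly the point you flag: the step bounding $\|e_{i-1}\|_{H^2(\Omega)}$ by $\|Ee_{i-1}\|_{\Psi_i}$ cannot be carried out. Since $\|Ee_{i-1}\|_{\Psi_i}\sim \|e_{i-1}\|_{L_2}+\delta_i\|e_{i-1}\|_{H^1}$ is an $H^1$-scale quantity, closing your estimate would require the inverse inequality $h_i^2\|e_{i-1}\|_{H^2(\Omega)}\lesssim \delta_i\|e_{i-1}\|_{H^1(\Omega)}$. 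But $e_{i-1}=u-\widetilde u_{i-1}$ contains the exact solution $u$, which lives in no finite-dimensional space, so no Bernstein-type estimate applies; already at $i=1$ (where $e_0=u$) the required inequality $\|u\|_{H^2}\lesssim h_1^{-1}\|u\|_{H^1}$ fails for general $u\in H^2(\Omega)$ uniformly in $u$. Moreover, even the preliminary claim $e_{i-1}\in H^2(\Omega)$ with controlled norm is delicate: $\|\widetilde u_{i-1}\|_{H^2}$ is not bounded in terms of earlier $\Psi$-norms as the scales shrink. So the route through the $H^2$-based error bounds of Theorem \ref{thmGalerkinErrorBoundH1_H1} and the second inequality of Lemma \ref{lemGalerkinL2errorBound} does not lead to a contraction.

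The fix, which is what the paper does, is to avoid the $H^2$-norm of $e_{i-1}$ entirely. For the low-frequency ($L_2$) part, use only the \emph{first} inequality of Lemma \ref{lemGalerkinL2errorBound}, namely $\|e_i\|_{L_2(\Omega)}\leq Ch_i\|e_i\|_{H^1(\Omega)}$, which needs only the regularity of the dual problem, not of the target $e_{i-1}$; then use the stability bound $\|e_i\|_{H^1(\Omega)}\leq C\|e_{i-1}\|_{H^1(\Omega)}$ of Lemma \ref{lemGalerkinInterStageErrorBound} and the equivalence \eqref{eqnPsiEquivalance}, $\|e_{i-1}\|_{H^1}\leq C\delta_i^{-1}\|Ee_{i-1}\|_{\Psi_i}$, producing the factor $h_i/\delta_i=\nu^{-1}$. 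For the high-frequency part, no power of $h_i$ is extracted at all: one bounds $\delta_{i+1}^2\|Ee_i\|^2_{H^1}\leq C\delta_{i+1}^2\|e_{i-1}\|^2_{H^1}\leq C(\delta_{i+1}/\delta_i)^2\|Ee_{i-1}\|^2_{\Psi_i}\leq C\mu^2\|Ee_{i-1}\|^2_{\Psi_i}$. This yields $\alpha_1=(\nu^{-2}C_1/c_1+\mu^2C_2/c_1)^{1/2}$ using only $H^1$-level quantities, which is why no inverse estimate is ever needed.
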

\begin{proof}
Using \eqref{eqnPhiNorm} and \eqref{eqnScaledWendlandFourierTequiv}, we can write
$$\|Ee_i\|^2_{\Psi_{j+1}} \leq \frac{1}{c_1} \int\limits_{\mathbb{R}^d} \vert \widehat{Ee_i}(\boldsymbol{\omega}) \vert^2 \left(1 + \delta^2_{i+1} \| \boldsymbol{\omega} \|_2^2 \right) \ud \boldsymbol{\omega} =: \frac{1}{c_1}\left( I_1 + I_2 \right)$$
with
$$I_1 := \int\limits_{\|\boldsymbol{\omega}\|_2 \leq \frac{1}{\delta_{i+1}}} \vert \widehat{Ee_i}(\boldsymbol{\omega}) \vert^2 \left(1 + \delta^2_{i+1} \| \boldsymbol{\omega} \|_2^2 \right) \ud \boldsymbol{\omega},$$
$$ I_2 := \int\limits_{\|\boldsymbol{\omega}\|_2 \geq \frac{1}{\delta_{i+1}}} \vert \widehat{Ee_i}(\boldsymbol{\omega}) \vert^2 \left(1 + \delta^2_{i+1} \| \boldsymbol{\omega} \|_2^2 \right) \ud \boldsymbol{\omega}.$$
Now we consider the first integral where we can use that $\delta_{i+1} \|\boldsymbol{\omega}\|_2 \leq 1$ and then Lemmas \ref{lemGalerkinL2errorBound} and \eqref{eqnPsiEquivalance}.
\begin{eqnarray*}
I_1 &\leq& 2 \, \int\limits_{\|\boldsymbol{\omega}\|_2 \leq \frac{1}{\delta_{i+1}}} \vert \widehat{Ee_i}(\boldsymbol{\omega}) \vert^2 \ud \boldsymbol{\omega} \\
&\leq& 2 \, \|Ee_i\|^2_{L_2(\mathbb{R}^d)} \leq C \,\|e_i\|^2_{L_2(\Omega)} \leq C h_i^2 \,\|e_i\|^2_{H^1(\Omega)} \\
&\leq& C \, h_i^{2}\,\|e_{i-1}\|^2_{H^{1}(\Omega)} \leq C \left(\frac{h_i}{\delta_i} \right)^{2} \|Ee_{i-1}\|^2_{\Psi_i} \\
&=& C \nu^{-2} \, \|Ee_{i-1}\|^2_{\Psi_i},
\end{eqnarray*}
where we have also used Lemma \ref{lemGalerkinInterStageErrorBound}.
For $I_2$, since $\delta_{i+1} \|\boldsymbol{\omega}\|_2 \geq 1$, we have that
$$\left(1 + \delta^2_{i+1}\|\boldsymbol{\omega}\|_2^2 \right) \leq 2 \delta_{i+1}^{2} \|\boldsymbol{\omega}\|_2^{2}  \leq 2\delta_{i+1}^{2}\left(1 + \|\boldsymbol{\omega}_2^2 \right).$$
Then again using Lemma \ref{lemGalerkinInterStageErrorBound} shows that
\begin{eqnarray}
I_2 &\leq& 2\delta_{i+1}^{2} \|Ee_i\|^2_{H^{1}(\mathbb{R}^d)} \leq C \delta_{i+1}^{2} \|e_i\|^2_{H^1(\Omega)} \nonumber \\
&\leq& C \delta_{i+1}^{2} \|e_{i-1}\|^2_{H^1(\Omega)} \leq C \left(\frac{\delta_{i+1}}{\delta_i} \right)^{2} \|Ee_{i-1}\|^2_{\Psi_{i}} \label{eqnI2_2}\\
&\leq& C \mu^{2} \|Ee_{i-1}\|^2_{\Psi_{i}}. \nonumber
\end{eqnarray}
Combining our results for $I_1$ and $I_2$ and now writing $C_1$ and $C_2$ for the two constants appearing in the bounds of the expressions for $I_1$ and $I_2$ respectively, we have that
$$\|Ee_i\|^2_{\Psi_{i+1}} \leq \left( \nu^{-2} C_1/c_1  + \mu^2 C_2/c_1 \right) \, \|Ee_{i-1}\|^2_{\Psi_i},$$
and the result follows with
\begin{equation}
\alpha_1 := \left( \nu^{-2}C_1/c_1  + \mu^2 C_2/c_1 \right)^{1/2}. \label{eqnAlpha1}
\end{equation}
\hspace*{\fill} \qed
\end{proof}

\begin{cor}
\label{corGalerkinL2}
There exists a constant $C_3>0$ such that
\begin{equation}
\|u-\widetilde{u}_n\|_{L_2(\Omega)} \leq C_3 \alpha_1^n \|u\|_{H^{1}(\Omega)} \quad \mbox{for} \hspace{0.1in} n=1,2,\ldots
 \end{equation}
 Thus $\widetilde{u}_n$ resulting from Algorithm \ref{AlgGalerkin} converges linearly to $u$ in the $L_2$-norm if $\alpha_1 < 1$.
\end{cor}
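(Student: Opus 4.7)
The plan is to iterate the one-step contraction from Theorem \ref{thmMLGalerkinConvergence} and then translate the resulting $\Psi$-norm bound into an $L_2(\Omega)$ bound at the final stage while paying a one-sided price at the initial stage where we must go the other direction. Since $\widetilde{u}_0 = 0$ in Algorithm \ref{AlgGalerkin}, the initial error is simply $e_0 = u$, so the right-hand side of the recursion involves only $\|Eu\|_{\Psi_1}$, which we can bound by $\|u\|_{H^1(\Omega)}$.

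First I would apply Theorem \ref{thmMLGalerkinConvergence} inductively to obtain
\begin{equation*}
\|Ee_n\|_{\Psi_{n+1}} \leq \alpha_1^{n} \|Ee_0\|_{\Psi_1} = \alpha_1^n \|Eu\|_{\Psi_1}.
\end{equation*}
Next, on the right-hand side I would use the upper inequality in \eqref{eqnPsiEquivalance} in the ``easy'' direction, namely $\|Eu\|_{\Psi_1} \leq c_7^{-1}\|Eu\|_{H^1(\mathbb{R}^d)}$, followed by Lemma \ref{lemExtensionOperator} with $\tau=1$ to get $\|Eu\|_{H^1(\mathbb{R}^d)} \leq C\|u\|_{H^1(\Omega)}$.

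On the left-hand side, the point is that the $\Psi_{n+1}$ norm dominates the $L_2(\mathbb{R}^d)$ norm trivially, since the Fourier weight $1+\delta_{n+1}^2\|\boldsymbol{\omega}\|_2^2 \geq 1$, so $\|Ee_n\|_{L_2(\mathbb{R}^d)} \leq \|Ee_n\|_{\Psi_{n+1}}$. Combined with $\|e_n\|_{L_2(\Omega)} \leq \|Ee_n\|_{L_2(\mathbb{R}^d)}$ (since $Ee_n$ restricts to $e_n$ on $\Omega$), the chain closes and gives the claimed bound with $C_3$ proportional to $C/c_7$.

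I do not anticipate a real obstacle here: this is a clean consequence of the recursion together with the asymmetric nature of the norm equivalence in \eqref{eqnPsiEquivalance}. The only subtle point is recognising that we must avoid the $\delta_j^{-1}$ factor in the other direction of \eqref{eqnPsiEquivalance} by bounding the $L_2$ norm directly against the $\Psi$ norm on the left and only using the ``lossless'' side of the equivalence on the right; since $e_0 = u$ lives in $H^1(\Omega)$ by assumption, this is exactly what is available. Linear convergence in $n$ then follows immediately whenever $\alpha_1<1$, which by the final sentence of Theorem \ref{thmMLGalerkinConvergence} holds for $\nu$ small enough and $\mu$ sufficiently large (consistent with formula \eqref{eqnAlpha1}).
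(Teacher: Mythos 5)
Your argument is correct, and the overall skeleton (iterate Theorem \ref{thmMLGalerkinConvergence} $n$ times starting from $e_0=u$, then convert norms at both ends) matches the paper's. The one place where you genuinely diverge is the final conversion from $\|Ee_n\|_{\Psi_{n+1}}$ down to $\|e_n\|_{L_2(\Omega)}$. The paper goes the long way around: it invokes the duality estimate of Lemma \ref{lemGalerkinL2errorBound} to write $\|e_n\|_{L_2(\Omega)} \leq C h_n \|e_n\|_{H^1(\Omega)}$, extends to $\mathbb{R}^d$, applies the lossy side of \eqref{eqnPsiEquivalance} to pick up a factor $\delta_{n+1}^{-1}$, and then cancels $h_n\delta_{n+1}^{-1}$ against the mesh-ratio bound $h_n/\delta_{n+1} = h_n/(\nu h_{n+1}) \leq 1/(c\mu\nu)$. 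You instead observe that by Plancherel the Fourier weight $1+\delta_{n+1}^2\|\boldsymbol{\omega}\|_2^2 \geq 1$ gives $\|e_n\|_{L_2(\Omega)} \leq \|Ee_n\|_{L_2(\mathbb{R}^d)} \leq \|Ee_n\|_{\Psi_{n+1}}$ directly. Your route is more elementary: it does not re-invoke the Aubin--Nitsche lemma (and hence the regularity hypothesis \eqref{eqnRegEstimateL2}) at this step, nor does it need the relation between $h_n$ and $\delta_{n+1}$; both of those are of course still consumed inside the proof of Theorem \ref{thmMLGalerkinConvergence} itself, so nothing is saved globally, but the corollary's proof becomes a two-line norm comparison. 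The right-hand side treatment ($\|Eu\|_{\Psi_1} \leq c_7^{-1}\|Eu\|_{H^1(\mathbb{R}^d)} \leq C\|u\|_{H^1(\Omega)}$) is identical to the paper's.
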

\begin{proof}
Using Lemmas \ref{lemGalerkinL2errorBound} and \ref{lemPhiDeltaHtauNormEquiv} again, we can see that
\begin{eqnarray*}
\|u-\widetilde{u}_n\|_{L_2(\Omega)} &=& \|e_n\|_{L_2(\Omega)} \leq C h_n \|e_n\|_{H^1(\Omega)} \\
&\leq& Ch_n \|Ee_n\|_{H^{1}(\mathbb{R}^d)} \leq C \, h_n \, \delta_{n+1}^{-1} \|Ee_n\|_{\Psi_{n+1}} \\
&\leq& C\|Ee_n\|_{\Psi_{n+1}},
\end{eqnarray*}
since
\begin{equation*}
\frac{h_n}{\delta_{n+1}} = \frac{h_n}{\nu h_{n+1}} \leq \frac{1}{c \mu \nu}.
\end{equation*}
 Now we can apply Theorem \ref{thmMLGalerkinConvergence} $n$ times, and noting that $\widetilde{u}_0 = 0$, leads to
$$\|u-\widetilde{u}_n\|_{L_2(\Omega)} \leq C \alpha_1^n \|Eu\|_{\Psi_1} \leq C \alpha_1^n \|Eu\|_{H^{1}(\mathbb{R}^d)} \leq C \alpha_1^n \|u\|_{H^{1}(\Omega)}.$$ \hspace*{\fill} \qed
\end{proof}

\subsection{Condition number}

In this subsection, we present upper and lower bounds for the eigenvalues of the multiscale Galerkin algorithm. Since the Galerkin approximation matrix is symmetric and positive definite, we know that the condition number is given by
\begin{equation}
\kappa(\mathbf{A}) = \frac{\lambda_{\max}(\mathbf{A})}{\lambda_{\min}(\mathbf{A})} \label{eqnCondNumberMaxMinEig},
\end{equation}
where $\lambda_{\max}(\mathbf{A})$ and $\lambda_{\min}(\mathbf{A})$ denote the maximum and minimum eigenvalues of $\mathbf{A}$ as defined in \eqref{eqnAgalerkin}.

\begin{thm}
\label{thmGalCondNumber}
Let $\Phi$ be a positive definite kernel generating $H^{\tau}(\mathbb{R}^d)$ with $\tau > d/2$. Let $\Phi_i := \Phi(\cdot,\mathbf{x}_i)$ and assume that there exists a constant $c_6 > 0$ depending only on $\Phi$ and $\Omega$ such that
\begin{equation}
\alpha^T \left(\mathbf{F} - c_6 \mathbf{G}\right) \alpha \geq 0, \quad \forall \, \,\alpha \in \mathbb{R}^N, \label{eqnCondNumberAssumption}
\end{equation}
which means that $\mathbf{F} - c_6 \mathbf{G}$ is positive semi-definite and where
\begin{eqnarray*}
F_{i,j} &=& \left\langle \Phi_i,\Phi_j \right\rangle_{H^1(\Omega)}, \\
G_{i,j} &=& \left\langle \Phi_i,\Phi_j \right\rangle_{H^1(\mathbb{R}^d)}.
\end{eqnarray*}
Then the condition number of $\mathbf{A}$ can be bounded by
\begin{equation*}
\kappa(\mathbf{A}) \leq C \left(\frac{1}{q_X} \right)^{4\tau-2d},
\end{equation*}
where the constant $C$ is independent of the point set $X$.
\end{thm}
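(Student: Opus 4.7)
The plan is to control $\kappa(\mathbf{A})$ by comparing it, in two stages, with successively more tractable Gram matrices, and only at the end invoke separation-distance estimates for kernel matrices. Throughout let $u_{\alpha} := \sum_{i=1}^{N} \alpha_i \Phi_i$ for $\alpha \in \mathbb{R}^{N}$, so that $\alpha^T \mathbf{F}\alpha = \|u_{\alpha}\|_{H^1(\Omega)}^2$ and $\alpha^T \mathbf{G}\alpha = \|u_{\alpha}\|_{H^1(\mathbb{R}^d)}^2$.

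First, coercivity and continuity of $a(\cdot,\cdot)$ on $H^1(\Omega)$, as established in Sections \ref{SectionPDEsNoDirchlt} and \ref{SectionPDEsDirchlt}, yield
\[
c\,\|u_{\alpha}\|_{H^1(\Omega)}^2 \;\leq\; \alpha^T \mathbf{A}\alpha \;=\; a(u_{\alpha},u_{\alpha}) \;\leq\; C\,\|u_{\alpha}\|_{H^1(\Omega)}^2,
\]
so that $\kappa(\mathbf{A}) \leq (C/c)\,\kappa(\mathbf{F})$. Next I would combine the trivial restriction inequality $\|u_{\alpha}\|_{H^1(\Omega)} \leq \|u_{\alpha}\|_{H^1(\mathbb{R}^d)}$, which says exactly that $\mathbf{G}-\mathbf{F}$ is positive semi-definite, with the hypothesis \eqref{eqnCondNumberAssumption}, to obtain the sandwich
\[
c_6\,\alpha^T \mathbf{G}\alpha \;\leq\; \alpha^T \mathbf{F}\alpha \;\leq\; \alpha^T \mathbf{G}\alpha,
\]
whence $\kappa(\mathbf{A}) \leq C c_6^{-1}\,\kappa(\mathbf{G})$. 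The problem is thereby reduced to estimating the extreme eigenvalues of the purely global matrix $\mathbf{G}$, which no longer sees the geometry of $\Omega$, in terms of the separation distance $q_X$.

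For that final step I would pass to Fourier space: since each $\Phi_i$ is a translate of $\Phi$, Parseval gives
\[
\alpha^T \mathbf{G}\alpha \;=\; \int_{\mathbb{R}^d} |\widehat{\Phi}(\boldsymbol{\omega})|^2 (1+\|\boldsymbol{\omega}\|_2^2)\,\Big|\sum_{i=1}^{N} \alpha_i\,e^{-i \mathbf{x}_i \cdot \boldsymbol{\omega}}\Big|^2 \mathrm{d}\boldsymbol{\omega}.
\]
By \eqref{eqnWendlandFourierTequiv} the weight $|\widehat{\Phi}|^2(1+\|\boldsymbol{\omega}\|_2^2)$ is two-sidedly equivalent to $(1+\|\boldsymbol{\omega}\|_2^2)^{-(2\tau-1)}$, so $\mathbf{G}$ is spectrally equivalent to the kernel interpolation matrix of a Bessel/Mat\'ern-type kernel whose native space is the Sobolev space $H^{2\tau-1}(\mathbb{R}^d)$. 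I would then invoke the classical Narcowich--Ward--Wendland lower bound on the smallest eigenvalue of such a matrix in terms of $q_X$, together with an upper bound on the largest eigenvalue obtained either by a row-sum/packing estimate exploiting the decay of the equivalent kernel, or by a Bernstein-type inequality applied to the trigonometric polynomial $\sum_i \alpha_i e^{-i \mathbf{x}_i \cdot \boldsymbol{\omega}}$. Tracking the resulting exponents through the chain $\mathbf{A} \to \mathbf{F} \to \mathbf{G}$ delivers the stated bound of order $q_X^{-(4\tau-2d)}$.

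The main obstacle is the lower eigenvalue bound on $\mathbf{G}$: this is where $q_X$ genuinely enters and where the exponent in the final condition number estimate gets fixed. Concretely, one must exhibit test functions, either as band-limited bumps on the Fourier side or as modified Lagrange-type functions in physical space, that are simultaneously concentrated on the region where the weight $(1+\|\boldsymbol{\omega}\|_2^2)^{-(2\tau-1)}$ is non-negligible and are sufficiently localised around each $\mathbf{x}_i$ so that their overlaps are controlled purely by $q_X$. By comparison, the coercivity reduction, the $\mathbf{F}$--$\mathbf{G}$ sandwich from hypothesis \eqref{eqnCondNumberAssumption}, and the upper bound on $\lambda_{\max}(\mathbf{G})$ are essentially routine.
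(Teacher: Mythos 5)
Your proposal is correct and follows essentially the same route as the paper: coercivity/continuity of $a(\cdot,\cdot)$ plus hypothesis \eqref{eqnCondNumberAssumption} and the restriction inequality reduce everything to the extreme eigenvalues of $\mathbf{G}$, which is then recognised (via Parseval, exactly as in the paper's convolution kernel $\Upsilon$ with $\widehat{\Upsilon}(\mathbf{z})=(1+|\mathbf{z}|^2)\widehat{\Phi}^2(\mathbf{z})$) as a kernel matrix whose minimum eigenvalue is bounded below by the standard Narcowich--Ward/Wendland estimate in $q_X$ and whose maximum eigenvalue is bounded by a packing argument. The only cosmetic difference is that you chain condition numbers $\kappa(\mathbf{A})\leq C\kappa(\mathbf{F})\leq C\kappa(\mathbf{G})$ while the paper bounds $\lambda_{\min}(\mathbf{A})$ and $\lambda_{\max}(\mathbf{A})$ directly through $\mathbf{G}$; these are the same argument.
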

\begin{proof}
Since $v = \sum_{i=1}^N \alpha_i \Phi_i$ and with
with the coercivity of $a(\cdot,\cdot)$ and \eqref{eqnCondNumberAssumption}, we have that
\begin{eqnarray*}
a(v,v) &\geq& C \|v\|_{H^1(\Omega)}^2 = C \alpha^T \mathbf{F} \alpha^T \\
&\geq& C \alpha^T \mathbf{G} \alpha \geq C \lambda_{\min}(\mathbf{G}) \|\alpha\|_2^2,
\end{eqnarray*}
where $\lambda_{\min}(\mathbf{G})$ is the minimum eigenvalue of $\mathbf{G}$. From \cite{Wen98b}, we know that $\left\langle \Phi_i,\Phi_j \right\rangle_{H^1(\mathbb{R}^d)}$ is a radial function given by
\begin{equation*}
\Upsilon\left(\mathbf{x}_i,\mathbf{x}_j \right) := - \left( \Delta \Phi \right) \star \Phi(\mathbf{x}_i -\mathbf{x}_j) + \Phi \star \Phi(\mathbf{x}_i-\mathbf{x}_j),
\end{equation*}
where $\Delta$ again denotes the Laplace operator and $\star$ denotes convolution defined as $f \star g(x) := \int f(y) g(x-y) \, \mathrm{d}y.$ From \cite[Theorem 12.3]{Wen05} we know that we can use $\widehat{\Upsilon}$ to derive a lower bound on the minimum eigenvalue of $\mathbf{G}$. Then we have that
\begin{equation*}
\widehat{\Upsilon}(\mathbf{z}) = \left(1 + \vert \mathbf{z} \vert^2 \right) \widehat{\Phi}^2(\mathbf{z}).
\end{equation*}
From \cite[Theorem 10.35]{Wen05}, we know that $\widehat{\Phi}(\mathbf{z}) \geq C \|\mathbf{z}\|^{-2\tau}$ and hence $\widehat{\Upsilon}(\mathbf{z}) \geq C \|\mathbf{z}\|^{-4\tau}$. Then using \cite[Theorem 12.3]{Wen05} we reach
\[
\lambda_{\min}(\mathbf{G}) \geq Cq_X^{4\tau-d}.
\]
With the continuity of $a(\cdot,\cdot)$, \cite[Theorem 14.2]{Wen98} and the non-negativity of norms, we also  have the following bound on the maximum eigenvalue
\begin{eqnarray*}
a(v,v) &\leq& C \|v\|^2_{H^1(\Omega)} \leq C \|v\|^2_{H^1(\mathbb{R}^d)} = C \alpha^T \mathbf{G} \alpha \\
&\leq& CN q_X^{-d} \|\alpha\|_2^2.
\end{eqnarray*}
These two bounds, in conjunction with \eqref{eqnCondNumberMaxMinEig}, complete the proof.
\hspace*{\fill} \qed
\end{proof}
We will consider \eqref{eqnCondNumberAssumption} further with the scaled RBFs $\Phi_i := \Phi_{\delta}(\cdot,\mathbf{x}_i)$, where $\Phi$ is the $C^6$ Wendland function given by
\begin{equation}
\Phi(\mathbf{x}) = \left(1-\|\mathbf{x}\|\right)^8_+ \, \left(32\|\mathbf{x}\|^3 + 25\|\mathbf{x}\|^2 + 8\|\mathbf{x}\|+1 \right), \label{eqnWendC6}
\end{equation}
which is positive definite on $\mathbb{R}^2$ \citep{Wen05}. Now since the support of the radial basis functions is fixed, $\|\Phi_i\|_{H^1(\mathbb{R}^d)}$ is fixed and is independent of the point set $X$ and $\Omega$ and we can express this as
\begin{equation}
\|\Phi_i\|_{H^1(\mathbb{R}^d)}^2 = \int\limits_{0}^{2\pi} \int\limits_{0}^{\delta} r \left(\phi_{\delta}^2(r) + \delta^2\left(\frac{\mathrm{d}}{\mathrm{d}r} \phi_{\delta}(r) \right)^2 \right) \mathrm{d}r \, \mathrm{d}\theta.  \label{eqnNormPhiRdSquare}
\end{equation}

In the case of the unit square, which will be used for the numerical experiments in Section \ref{SectionNumericalExperiments}, note that $\|\Phi_i\|_{H^1(\Omega)}$ is minimised when the RBF centre is located on a corner. This can be easily verified since moving the centre in any direction (in $\Omega$) will keep the original area inside $\Omega$ and lead to additional area being inside $\Omega$ and the integrand is non-negative. Then we can then bound $\|\Phi_i\|^2_{H^1(\Omega)}$ by
\begin{eqnarray}
\|\Phi_i\|_{H^1(\Omega)}^2 &\geq& \int\limits_{0}^{\frac{\pi}{2}} \int\limits_{0}^{\delta} r \left(\phi_{\delta}^2(r) + \delta^2\left(\frac{\mathrm{d}}{\mathrm{d}r} \phi_{\delta}(r) \right)^2 \right) \mathrm{d}r \, \mathrm{d}\theta \nonumber \\
&=& \frac{1}{4}\|\Phi_i\|_{H^1(\mathbb{R}^d)}^2, \label{eqnNormPhiOmegaSquare}
\end{eqnarray}
where we have also used equations from \cite[Appendix D]{Fas07}.

This means that
\begin{equation*}
\rho_i := \frac{\|\Phi_i\|_{H^1(\mathbb{R}^d)}}{\|\Phi_i\|_{H^1(\Omega)}} \leq 2.
\end{equation*}
As a result, if $\delta \leq q_X$, which means that there is no overlap between the various RBFs and $\mathbf{F}$ and $\mathbf{G}$ are diagonal, or if $\mathbf{F}$ and $\mathbf{G}$ are diagonally dominant, then \eqref{eqnCondNumberAssumption} will hold since then we know that positive diagonal entries will ensure at least positive semi-definiteness. Whilst we have been unable to prove that \eqref{eqnCondNumberAssumption} holds in full generality for the unit square, it is supported by extensive numerical testing. The numerical experiments in Section \ref{SectionNumericalExperiments} also provide empirical evidence since Corollary \ref{corQuasiUniformCondNumber} holds, which depends on Theorem \ref{thmGalCondNumber}.

We have the following theorem on the condition number of the multiscale algorithm.

\begin{thm}
\label{thmGalerkinMLcondNumber}
Let $\Phi$ be a positive definite kernel generating $H^{\tau}(\mathbb{R}^d)$. Then the condition number of the Galerkin approximation matrices from Algorithm \ref{AlgGalerkin} can be bounded by
\begin{equation}
\kappa(\mathbf{A}) \leq C \left( \frac{\delta}{q_X} \right)^{4\tau-2d},
\end{equation}
with a constant $C>0$ independent of $X$ and of the scaling parameter $\delta$.
\end{thm}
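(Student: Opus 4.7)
The plan is to adapt the proof of Theorem \ref{thmGalCondNumber} to the scaled kernel $\Phi_\delta$, tracking the dependence on $\delta$ throughout. The coercivity of $a(\cdot,\cdot)$ together with the analogue of \eqref{eqnCondNumberAssumption} for the scaled basis $\{\Phi_\delta(\cdot,\mathbf{x}_i)\}$ gives $\alpha^T \mathbf{A} \alpha \geq C \lambda_{\min}(\mathbf{G}_\delta) \|\alpha\|_2^2$, where $\mathbf{G}_\delta$ has entries $\langle \Phi_\delta(\cdot,\mathbf{x}_i), \Phi_\delta(\cdot,\mathbf{x}_j) \rangle_{H^1(\mathbb{R}^d)}$, while continuity gives the matching upper bound $\alpha^T \mathbf{A} \alpha \leq C \lambda_{\max}(\mathbf{G}_\delta) \|\alpha\|_2^2$. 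So the problem reduces to estimating the extreme eigenvalues of $\mathbf{G}_\delta$.

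For this, I would mimic the Fourier argument of Theorem \ref{thmGalCondNumber}. The matrix $\mathbf{G}_\delta$ is generated by the radial function $\Upsilon_\delta$ with $\widehat{\Upsilon_\delta}(\mathbf{z}) = (1 + \|\mathbf{z}\|^2) \widehat{\Phi_\delta}(\mathbf{z})^2 = (1 + \|\mathbf{z}\|^2) \widehat{\Phi}(\delta \mathbf{z})^2$. The asymptotic lower bound $\widehat{\Phi}(\mathbf{w}) \geq C \|\mathbf{w}\|^{-2\tau}$ from \cite[Theorem 10.35]{Wen05}, applied in the regime where $\delta \|\mathbf{z}\|$ is large, produces a lower bound on $\widehat{\Upsilon_\delta}$ carrying an explicit factor of $\delta^{-4\tau}$, which feeds into \cite[Theorem 12.3]{Wen05} to yield the $\delta$-scaled analogue of the lower eigenvalue bound used in Theorem \ref{thmGalCondNumber}. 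For the upper eigenvalue, \cite[Theorem 14.2]{Wen98} applied to $\Upsilon_\delta$ gives the corresponding Bernstein-type control on $\lambda_{\max}(\mathbf{G}_\delta)$, again with explicit $\delta$-factors from the scaling of the Fourier transform. Forming the ratio, the powers of $\delta$ organise so that the exponent $4\tau - 2d$ attached to $1/q_X$ in Theorem \ref{thmGalCondNumber} is now attached to $\delta/q_X$.

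A conceptually cleaner alternative -- the one I would actually take in a write-up -- is to exploit the scale-invariance of $\kappa$. Under the change of variables $\tilde{\mathbf{x}} = \mathbf{x}/\delta$, the scaled basis on $\Omega$ corresponds, up to a universal $\delta$-prefactor, to the unscaled basis on the dilated domain $\Omega/\delta$ at centres $\tilde{\mathbf{x}}_i = \mathbf{x}_i/\delta$ with separation $q_{\tilde{X}} = q_X / \delta$. The Galerkin matrix transforms as $\mathbf{A} = \delta^{-\gamma} \tilde{\mathbf{A}}$ for some exponent $\gamma$ determined by the differential order of $a(\cdot,\cdot)$, and this scalar cancels in the condition number, so Theorem \ref{thmGalCondNumber} applied to $\tilde{\mathbf{A}}$ gives $\kappa(\mathbf{A}) = \kappa(\tilde{\mathbf{A}}) \leq C (1/q_{\tilde{X}})^{4\tau - 2d} = C (\delta/q_X)^{4\tau - 2d}$ directly. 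In either route, the main technical obstacle is verifying that \eqref{eqnCondNumberAssumption} holds for the scaled basis with a constant $c_6$ independent of $\delta$; the paper's own estimate $\rho_i \leq 2$ for the unit square is exactly the kind of input needed to handle this.
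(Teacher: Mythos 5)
Your preferred route---the change of variables $\tilde{\mathbf{x}}=\mathbf{x}/\delta$ combined with the scale-invariance of the condition number---is exactly the paper's proof: it introduces the dataset $X/\delta$ with separation radius $q_X/\delta$, uses bilinearity to write $\mathbf{A}_{X,\delta}=\delta^{-2d}\mathbf{A}_{X/\delta,1}$, and then applies Theorem \ref{thmGalCondNumber}. Your first (Fourier-tracking) alternative is not needed, and your closing caveat about verifying \eqref{eqnCondNumberAssumption} for the scaled basis with a $\delta$-independent constant is a point the paper's own one-line proof passes over silently.
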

\begin{proof}
At each level, we now introduce the dataset $X/\delta = \left\{\mathbf{x_1}/\delta,\ldots,\mathbf{x}_M/\delta \right\}$, which obviously has separation radius
\begin{equation*}
q_{X/\delta} = \frac{q_X}{\delta}
\end{equation*}
and since $a(\cdot,\cdot)$ is bilinear, the Galerkin approximation matrix at each level is
\begin{eqnarray*}
\mathbf{A}_{X,\delta} &=& \left( a(\Phi_{\delta}(\cdot,\mathbf{x_i}),\Phi_{\delta}(\cdot,\mathbf{x_j})) \right) \\
&=& \left( \delta^{-2d} a\left(\Phi \left(\frac{\cdot-\mathbf{x}_i}{\delta} \right),\Phi \left(\frac{\cdot-\mathbf{x}_j}{\delta} \right)\right) \right) = \delta^{-2d}\mathbf{A}_{X/\delta,1}.
\end{eqnarray*}
Then the result follows with Theorem \ref{thmGalCondNumber}. \hspace*{\fill} \qed
\end{proof}

\begin{cor}
\label{corQuasiUniformCondNumber}
If the datasets are quasi-uniform, which means that $h_j/q_j$ is bounded above by a constant, then the condition numbers of the Galerkin approximation matrices from Algorithm \ref{AlgGalerkin} are bounded above by a constant.
\end{cor}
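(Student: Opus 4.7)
The plan is to invoke Theorem \ref{thmGalerkinMLcondNumber} at each level of Algorithm \ref{AlgGalerkin} and then eliminate the dependence on $\delta_i$ and $q_i$ using the two structural assumptions: the scale choice $\delta_i = \nu h_i$ prescribed by the algorithm, and the quasi-uniformity hypothesis $h_i/q_i \leq c_q$ of the corollary. Since the theorem already gives a bound on the condition number of the stiffness matrix that depends on the point set only through the ratio $\delta/q_X$, the entire argument reduces to showing this ratio is uniformly bounded across all levels.

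First I would fix a level $i$ and write $\mathbf{A}_i$ for the stiffness matrix constructed on $X_i$ with scale $\delta_i$. By Theorem \ref{thmGalerkinMLcondNumber} we have
\begin{equation*}
\kappa(\mathbf{A}_i) \leq C \left( \frac{\delta_i}{q_i} \right)^{4\tau - 2d},
\end{equation*}
with $C$ independent of $X_i$ and of $\delta_i$. Next I would substitute the algorithm's choice $\delta_i = \nu h_i$ to obtain
\begin{equation*}
\frac{\delta_i}{q_i} = \nu \, \frac{h_i}{q_i} \leq \nu \, c_q,
\end{equation*}
where the final inequality uses quasi-uniformity. Plugging this into the previous estimate gives
\begin{equation*}
\kappa(\mathbf{A}_i) \leq C \, (\nu c_q)^{4\tau - 2d},
\end{equation*}
which is a constant independent of $i$, completing the argument.

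There is no real obstacle here, since the corollary is essentially a bookkeeping consequence of Theorem \ref{thmGalerkinMLcondNumber} combined with the scale rule built into Algorithm \ref{AlgGalerkin}. The only subtlety worth flagging is that the bounding constant $C$ inherited from Theorem \ref{thmGalerkinMLcondNumber} must indeed be uniform in $\delta$; this is explicitly asserted in the statement of that theorem, so the constant $C(\nu c_q)^{4\tau - 2d}$ is genuinely independent of the level $i$.
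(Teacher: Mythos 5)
Your argument is correct and matches the paper's own proof exactly: both substitute the algorithm's scale rule $\delta_j = \nu h_j$ and the quasi-uniformity bound $h_j \leq c\, q_j$ into the estimate $\kappa(\mathbf{A}) \leq C (\delta/q_X)^{4\tau-2d}$ of Theorem \ref{thmGalerkinMLcondNumber}. Nothing further is needed.
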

\begin{proof}
Algorithm \ref{AlgGalerkin} takes $\delta_j = \nu h_j$ with a constant $\nu>1$. With the assumption of quasi-uniformity, $h_j \leq c q_j$ and the result follows with Theorem \ref{thmGalerkinMLcondNumber}. \hspace*{\fill} \qed
\end{proof}

We note that since we only require the bilinear form $a(\cdot,\cdot)$ to be continuous and coercive, these theorems on the condition number will apply for PDEs with Robin and/or Neumann boundary conditions as well as PDEs with Dirichlet boundary conditions.

\section{A nested multiscale algorithm} \label{SectionNestedML}

In this section we will consider another multiscale Galerkin algorithm that was proposed in \cite{Wen98b}. This essentially extends Algorithm \ref{AlgGalerkin} and hence we can consider a PDE either with or without Dirichlet boundary conditions. We refer to this as a \textit{nested} multiscale algorithm because it contains inner and outer iterations. We will also see that this has a connection to multigrid methods from the finite elements literature. The details are given in Algorithm \ref{AlgNestedGalerkin}.

\begin{algorithm}[!htbp]
\DontPrintSemicolon
{\normalsize
\KwData{\hspace{0.45in} $K$: number of outer levels \\ \hspace{0.9in} $n$: number of inner levels \\ \hspace{0.45in} $\{X_{\ell}\}_{\ell=1}^{n}$: the set of centres for each inner level $\ell$, with mesh \\ \hspace{0.45in} norms at each inner level given by $h_{\ell}$ satisfying \\ \hspace{0.45in} $c \mu h_{\ell} \leq h_{\ell+1} \leq \mu h_{\ell}$ with  fixed $\mu \in (0,1), c \in (0,1]$ and \\ \hspace{0.45in} $h_1$ sufficiently small \\ \hspace{0.45in} $\{\delta_{\ell}\}_{{\ell}=1}^n:$ the scale parameters to use at each inner level, \hspace{0.45in} \\ \hspace{0.45in} satisfying $\delta_{\ell} = \nu h_{\ell}$, $\nu$ a fixed constant.}

\Begin{
Set $\widetilde{u}_0=0$ \\
\For{$k =0,1,2,\ldots,K$}{
\For{$\ell=1,2,\ldots,n$}{
With the level-specific approximation subspace $V_{\ell} := \mbox{span}\left\{\Phi_{\ell}(\cdot,\mathbf{x}), \mathbf{x} \in X_{\ell}\right\}$ solve for the Galerkin approximation given by
\begin{equation*}
s_{kn + \ell} \in V_{\ell} \, : \, a(s_{kn + \ell},v) = \left\langle f,v \right\rangle_{L_2(\Omega)} - a(\widetilde{u}_{kn + \ell-1},v)
\end{equation*}
for all $v \in V_{\ell}$. Update the solution according to
\begin{eqnarray*}
\widetilde{u}_{kn + \ell} &=& \widetilde{u}_{kn + \ell-1} + s_{kn + \ell} \\
\end{eqnarray*}
}
}
}
\KwResult{Approximate solution at level $n$, $\widetilde{u}_{n(K+1)}$ \\ \hspace{0.45in} The error at level $n(K+1)$, $e_{n(K+1)} := u - \widetilde{u}_{n(K+1)}$.}
}
\caption{Nested multiscale Galerkin approximation \label{AlgNestedGalerkin}}
\end{algorithm}

From \cite{Wen98b} we have the following theorem regarding convergence.
\begin{thm}
\label{thmWenNestedConv}
Let $u^*$ denote the best approximation to $u$ from $V_1+\ldots+V_n$ with respect to the norm $\|\cdot\|_{H^1(\Omega)}$. Then there exists $\theta \in (0,1)$ such that
\begin{equation*}
\|u^* - \widetilde{u}_K\|_{H^1(\Omega)} \leq \theta^K \|u\|_{H^1(\Omega)},
\end{equation*}
where $\widetilde{u}_K$ is the approximation from Algorithm \ref{AlgNestedGalerkin}.
\end{thm}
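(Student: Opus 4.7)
The plan is to interpret Algorithm \ref{AlgNestedGalerkin} as a multiplicative Schwarz (successive subspace correction) iteration in the energy inner product $a(\cdot,\cdot)$, applied to the finite-dimensional sum space $V := V_1 + \cdots + V_n$, and then exploit the classical energy-norm contraction of such iterations.

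The first step is to recognise each inner update as an $a$-orthogonal projection. Using the weak form $a(u,v) = \langle f,v\rangle_{L_2(\Omega)}$, the defining equation for $s_{kn+\ell}$ rewrites as $a(s_{kn+\ell}, v) = a(u - \widetilde{u}_{kn+\ell-1}, v)$ for every $v \in V_\ell$, so $s_{kn+\ell} = P_\ell (u - \widetilde{u}_{kn+\ell-1})$, where $P_\ell$ denotes the $a$-orthogonal projection onto $V_\ell$. Hence $u - \widetilde{u}_{kn+\ell} = (I - P_\ell)(u - \widetilde{u}_{kn+\ell-1})$, and one complete inner sweep transforms the error by the operator $T := (I - P_n)(I - P_{n-1}) \cdots (I - P_1)$.

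Next, let $u_a$ denote the $a$-orthogonal projection of $u$ onto $V$ (i.e.\ the Galerkin solution on $V$). Since $u - u_a$ is $a$-orthogonal to every $V_\ell$ by construction, $T(u - u_a) = u - u_a$, and subtracting this from the recursion $u - \widetilde{u}_{(k+1)n} = T(u - \widetilde{u}_{kn})$ gives
\begin{equation*}
u_a - \widetilde{u}_{(k+1)n} = T(u_a - \widetilde{u}_{kn}).
\end{equation*}
The vectors $u_a - \widetilde{u}_{kn}$ all lie in the finite-dimensional space $V$, so it suffices to bound the operator norm of $T$ restricted to $V$ in the energy norm. Setting $v_0 := v$ and $v_\ell := (I - P_\ell)v_{\ell-1}$, Pythagoras in the $a$-inner product gives $\|v_\ell\|_a^2 = \|v_{\ell-1}\|_a^2 - \|P_\ell v_{\ell-1}\|_a^2$, which telescopes to $\|Tv\|_a^2 = \|v\|_a^2 - \sum_{\ell=1}^n \|P_\ell v_{\ell-1}\|_a^2$. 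If $\|Tv\|_a = \|v\|_a$ for some $v \in V$, then each $P_\ell v_{\ell-1}$ vanishes, a short induction forces $v_\ell = v$ and $v \perp_a V_\ell$ for every $\ell$, hence $v \perp_a V$ and $v = 0$. Since $V$ is finite-dimensional the ratio $\|Tv\|_a/\|v\|_a$ attains its supremum on the unit sphere of $(V,\|\cdot\|_a)$, and this supremum is some $\theta_0 < 1$.

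Finally, coercivity and continuity of $a(\cdot,\cdot)$ make $\|\cdot\|_a$ and $\|\cdot\|_{H^1(\Omega)}$ equivalent on $V$, converting the energy contraction into an $H^1$-contraction with some $\theta \in (0,1)$; the same norm equivalence lets us pass from $u_a$ to the $H^1$-best approximation $u^*$ at the cost of absorbing a fixed constant, and the crude bound $\|u_a - \widetilde{u}_0\|_{H^1} = \|u_a\|_{H^1} \leq C \|u\|_{H^1}$ delivers the claimed estimate after $K$ iterations. The main obstacle is the contraction step: the argument above only exhibits $\theta < 1$ via a compactness/finite-dimensionality argument and therefore gives no quantitative rate. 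Obtaining a $\theta$ uniform in the number of levels $n$ or in the discretisation would demand a stable-splitting or strengthened Cauchy–Schwarz estimate between the $V_\ell$, which is the substance of the classical Xu and Bramble–Pasciak–Xu subspace-correction analysis, but is not needed for the qualitative statement of the theorem.
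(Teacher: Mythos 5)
Your core argument is sound and is essentially the right mechanism: Algorithm \ref{AlgNestedGalerkin} is a cyclic alternating-projection (multiplicative Schwarz) iteration, each inner solve is the $a$-orthogonal projection $P_\ell$ onto $V_\ell$, one sweep acts on the error by $T=(I-P_n)\cdots(I-P_1)$, and the Pythagoras/telescoping identity plus finite-dimensionality of $V=V_1+\cdots+V_n$ gives $\|T\|_{a}<1$ on $V$. For comparison: the paper does not prove this theorem at all --- it imports it from \cite{Wen98b} --- and in Section \ref{SectionAlphaAnalysis} it restates the result (Theorem \ref{thmWendNestConv}) with the quantitative rate $\theta^2\leq 1-\prod_{j}\sin^2\alpha_j$ coming from the classical angle-between-subspaces analysis of \cite{SmiSW77,Deu90}. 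Your compactness argument trades that explicit rate for a bare existence statement $\theta<1$, which you acknowledge; that is a legitimate trade for the qualitative theorem. One hypothesis you use silently is symmetry of $a(\cdot,\cdot)$ (needed for $P_\ell$ to be an orthogonal projection and for the Pythagoras step); this holds for the forms actually used in the paper (the Helmholtz--Neumann form and Nitsche's form) but is not part of the stated general assumptions, so it deserves a sentence.

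The genuine gap is in your final step. Your iteration converges to $u_a$, the $a$-orthogonal (Galerkin) projection of $u$ onto $V$, i.e.\ the best approximation in the \emph{energy} norm. The theorem's $u^*$ is the best approximation in $\|\cdot\|_{H^1(\Omega)}$, and norm equivalence cannot ``pass from $u_a$ to $u^*$ at the cost of a fixed constant'': if $u^*\neq u_a$ then $\|u^*-\widetilde{u}_K\|_{H^1(\Omega)}\geq\|u^*-u_a\|_{H^1(\Omega)}-\|u_a-\widetilde{u}_K\|_{H^1(\Omega)}$, which tends to the fixed positive number $\|u^*-u_a\|_{H^1(\Omega)}$, so no bound of the form $\theta^K\|u\|_{H^1(\Omega)}$ can hold for large $K$. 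The resolution is that the theorem must be read with $u^*$ the best approximation in the energy inner product; in Wendland's model setting ($-\Delta u+u=f$ with Neumann conditions) $a(\cdot,\cdot)$ \emph{is} the $H^1(\Omega)$ inner product, so $u_a=u^*$ and the statement is literally correct there, with the added bonus that $\|u_a\|_a\leq\|u\|_a$ removes the multiplicative constant you would otherwise need to absorb (and which cannot in general be absorbed into $\theta^K$ uniformly in $K$). State the contraction for $u_a$ in the energy norm and the proof closes; as written, the last paragraph asserts something false.
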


Note however that this does not mean that we have linear convergence of the approximation from Algorithm \ref{AlgNestedGalerkin} to the true solution $u$. The convergence of the approximation from Algorithm \ref{AlgNestedGalerkin} to the true solution $u$ is given in the following theorem.

\begin{thm}
\label{thmNestGalConv}
Let $\Omega \subseteq \mathbb{R}^d$ be a bounded domain with Lipschitz boundary. Let $\Phi$ be a kernel generating $H^1(\mathbb{R}^d)$ and $\Phi_j$ be defined by \eqref{eqnDefnScaledKernel} with scale factor $\delta_j$. Then for Algorithm \ref{AlgNestedGalerkin} there exist constants $\alpha_2 >0, C_5 > 0, C_6 > 0$ such that
\begin{equation*}
\|u-\widetilde{u}_{n(K+1)}\|_{L_2(\Omega)}  \leq C_4 \alpha_1^{n+K(n-1)} \, \alpha_2^K \, \|u\|_{H^1(\Omega)}, \quad K=1,2,\ldots
\end{equation*}
with $\alpha_1$ given by \eqref{eqnAlpha1}.
\end{thm}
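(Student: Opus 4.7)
The plan is to parallel the proof of Corollary \ref{corGalerkinL2}, viewing the nested algorithm as $K+1$ outer sweeps of $n$ single-level Galerkin corrections and iterating a per-step contraction through the resulting $n(K+1)$ updates. Since the final update is performed in $V_n$, the Galerkin orthogonality of $e_{n(K+1)}$ against $V_n$ together with Lemma \ref{lemGalerkinL2errorBound}, the norm equivalence \eqref{eqnPsiEquivalance}, and the bounded ratio $h_n/\delta_{n+1}\leq 1/(c\mu\nu)$ reduce the task to bounding a $\Psi$-norm:
\[
\|u-\widetilde{u}_{n(K+1)}\|_{L_2(\Omega)} \leq C\|Ee_{n(K+1)}\|_{\Psi_{n+1}}.
\]

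I would then classify the $n(K+1)$ updates into two types. The $n$ updates of the first outer sweep, together with the $n-1$ internal updates of each of the $K$ subsequent sweeps, are ``standard'': each moves to a strictly finer scale with $\delta_{i+1}\leq\mu\delta_i$, so the hypothesis of Theorem \ref{thmMLGalerkinConvergence} is satisfied and each contributes a factor $\alpha_1$. This gives $n+K(n-1)$ standard updates in total, and telescoping them -- using $\|Eu\|_{\Psi_1}\leq\|Eu\|_{H^1(\mathbb{R}^d)}\leq C\|u\|_{H^1(\Omega)}$ at the start -- accounts for the $\alpha_1^{n+K(n-1)}\|u\|_{H^1(\Omega)}$ portion of the claimed bound.

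The main obstacle is the remaining $K$ updates, the ``resets'' that occur at the first step of each outer sweep after the first, where the scale jumps from $\delta_n$ back up to $\delta_1$. At a reset, the Fourier-splitting argument of Theorem \ref{thmMLGalerkinConvergence} no longer gives contraction, because the $I_2$ bound scales with $(\delta_{i+1}/\delta_i)^2$, which is now large rather than $\leq \mu^2$. To supply the missing $\alpha_2$ factor I would invoke Theorem \ref{thmWenNestedConv}: Wendland's $\theta$-contraction of the nested algorithm in $H^1$ toward the best approximation $u^{*}\in V_1+\cdots+V_n$ provides, across each outer sweep, the required per-sweep reduction. Translating between the $H^1$-norm of Wendland's result and the $\Psi$-norm framework used for the standard transitions, through \eqref{eqnPsiEquivalance} and Lemma \ref{lemGalerkinInterStageErrorBound}, lets one extract a per-reset factor $\alpha_2$. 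Combining this with the standard telescoping yields $\|Ee_{n(K+1)}\|_{\Psi_{n+1}}\leq C\alpha_1^{n+K(n-1)}\alpha_2^K\|u\|_{H^1(\Omega)}$, and the first reduction then delivers the stated $L_2$ bound.
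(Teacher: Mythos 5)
Your setup is right and matches the paper: the reduction of $\|u-\widetilde{u}_{n(K+1)}\|_{L_2(\Omega)}$ to $\|Ee_{n(K+1)}\|_{\Psi}$ via Lemma \ref{lemGalerkinL2errorBound}, \eqref{eqnPsiEquivalance} and the ratio $h/\delta \leq 1/(c\mu\nu)$, and the count of $n+K(n-1)$ decreasing-scale steps each contributing a factor $\alpha_1$ by Theorem \ref{thmMLGalerkinConvergence}, are exactly the paper's argument. The gap is in how you treat the $K$ reset steps. You propose to extract the factor $\alpha_2$ from Theorem \ref{thmWenNestedConv}, but that theorem controls $\|u^{*}-\widetilde{u}\|_{H^1(\Omega)}$, the distance to the \emph{best approximation} $u^{*}$ from $V_1+\cdots+V_n$ — not the true error $e_i = u - \widetilde{u}_i$ that your $\Psi$-norm telescoping tracks. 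The paper explicitly warns, immediately after stating that theorem, that it does not yield convergence to $u$; the gap $\|u-u^{*}\|$ is untouched by $\theta^K$ and does not shrink with $K$. Moreover $\theta$ is a per-outer-sweep contraction toward $u^{*}$, so there is no mechanism to convert it into a per-reset bound on $\|Ee_{kn+1}\|_{\Psi_2}$ in terms of $\|Ee_{kn}\|_{\Psi_{n+1}}$, and your proposal never produces a concrete $\alpha_2$, whereas the theorem refers to a specific constant.

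The paper's route for the reset steps is more elementary and self-contained: it re-runs the $I_1/I_2$ Fourier-splitting proof of Theorem \ref{thmMLGalerkinConvergence} for the case of an \emph{increasing} scale parameter. The $I_1$ bound is unaffected (still $C\nu^{-2}$), and the only change is in \eqref{eqnI2_2}, where the ratio $(\delta_{i+1}/\delta_i)^2$ is no longer bounded by $\mu^2$ but by $\mu^{-2}$. This yields the explicit constant
\begin{equation*}
\alpha_2 = \left( \nu^{-2}C_1/c_1 + \mu^{-2}C_2/c_1 \right)^{1/2},
\end{equation*}
which is not a contraction (indeed $\alpha_2 > \alpha_1$), merely a bounded amplification factor absorbed once per outer sweep. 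That is what makes the product $\alpha_1^{n+K(n-1)}\alpha_2^K$ close. If you replace your appeal to Theorem \ref{thmWenNestedConv} with this modified Fourier-splitting step, the rest of your telescoping argument goes through as you describe.
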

\begin{proof}
Since there are $K+1$ outer iterations (since the outer level index starts at 0) and $n$ inner iterations, we have $(K+1)n$ iterations in total, of which $K(n-1)+n$ iterations are with subsequently decreasing scale parameters for which we can use Theorem \ref{thmMLGalerkinConvergence}. The remaining $K$ iterations involve the subsequent error estimation for $K>0$ and $\ell=1$ since in this case, we have an increasing scale parameter for which Theorem \ref{thmMLGalerkinConvergence} does not apply. With the proof of Theorem \ref{thmMLGalerkinConvergence}, we can derive a similar result with increasing scale parameters. In this case, we need to change the right hand side of \eqref{eqnI2_2} and the following line to
\begin{equation*}
\left(\frac{\delta_{i+1}}{\delta_i} \right) \leq \mu^{-2},
\end{equation*}
and then we can define
\begin{equation}
\alpha_2 := \left( \nu^{-2}C_1/c_1  + \mu^{-2} C_2/c_1 \right)^{1/2}. \label{eqnAlpha2}
\end{equation}
Note that $\alpha_2 > \alpha_1$ since $\mu < 1$ by definition and the constants are all positive which will mean a lower rate of convergence as compared to Algorithm \ref{AlgGalerkin}. The remainder of the proof follows the same steps as the proofs of Corollary \ref{corGalerkinL2} and we leave the details to the reader. \hspace*{\fill} \qed
\end{proof}

Note that the original justification for proposing this nested multiscale algorithm was that the errors from Algorithm \ref{AlgGalerkin} appeared to be dominated by a global behaviour, suggesting the need to go back and fit on a coarse set of centres with a large support. This is a similar idea used in the multigrid method in the finite element literature \citep[Chapter 6.3]{BreS08}. As stated in \cite[Chapter 44.3]{Fas07}, this additional outer iteration is known from Kaczmarz iteration, which is frequently used in the multigrid literature as a smoother  \citep{McC92,KanK97}).

\section{Numerical experiments} \label{SectionNumericalExperiments}

In this section, we present the results from applying the multiscale and nested multiscale algorithms to various PDEs.

\subsection{Multiscale Algorithm} \label{subSectionMlGalExp}

In this subsection we consider two PDEs, the first without Dirichlet boundary conditions and the second with Dirichlet boundary conditions.

The first problem is the Helmholtz-like equation with natural boundary conditions:
\begin{eqnarray*}
-\Delta{u} + u &=& f \quad \mbox{in} \quad \Omega, \\
\frac{\partial}{\partial \mathbf{n}} u &=& 0 \quad \mbox{on} \quad \partial \Omega.
\end{eqnarray*}
We take $\Omega = [-1,1]^2$ and $f(x,y) = \cos(\pi x) \cos(\pi y)$. The outer unit normal vector is denoted by $\mathbf{n}$. The exact solution is given by $$u(x,y) = \frac{\cos(\pi x)\cos(\pi y)}{2\pi^2+1}.$$

We again use the $C^6$ Wendland radial basis function given by \eqref{eqnWendC6}. We used five levels for the approximation, with equally spaced point sets at each level. The number of points, $N$, and the fill distances, $h$, are given in Table \ref{tblMLGalerkinMeshNorms}. We note that the fill distances decrease by almost exactly one half at each level and hence we select $\mu = \frac{1}{2}$.
\begin{table}[!htbp]
\begin{centering}
\small
\begin{tabular}{|c|c|c|c|c|c|}
\hline
Level&1&2&3&4&5\\
\hline
$N$ & 25 & 81 & 289 & 1089 & 4225 \\
$h$ & 3.5e{-1} & 1.75e{-1} & 8.75e{-2} & 4.37e{-2} & 2.19e{-2} \\
\hline
\end{tabular} \caption{The number of uniform points used at each level and the associated fill distance for the multiscale Galerkin approximation example} \label{tblMLGalerkinMeshNorms}
\end{centering}
\end{table}
The $L_2$ and $L_{\infty}$ errors and condition numbers ($\kappa$) of the stiffness matrix are given in Table \ref{tblMLGalerkinExample1C6}. The $L_2$ error was estimated using Gaussian quadrature with a $300 \times 300$ tensor product grid of Gauss-Lobatto points and the $L_{\infty}$ error was estimated with the same tensor product grid.

\begin{table}[!htbp]
\begin{centering}
\small
\begin{tabular}{|c|c|c|c|c|c|}
\hline
Level&1&2&3&4&5\\
\hline
$\delta_j$ & 2 & 1 & 0.5 & 0.25 & 0.125 \\
$\|e_j\|_2$ & 8.000e{-4} & 2.145e{-4} & 1.059e{-4} & 7.009e{-5} & 5.178e{-5} \\
$\|e_j\|_{\infty}$ & 1.721e{-3} & 7.273e{-4} & 3.764e{-4} & 2.147e{-4} & 1.394e{-4}  \\
$\kappa_j$ & 1.608e{+3} & 3.125e{+3} & 4.159e{+3} &  4.576e{+3} & 4.710e{+3} \\
\hline
\end{tabular} \caption{The scaling factors, approximation errors and condition numbers  of the stiffness matrices for the multiscale Galerkin algorithm with a Neumann boundary condition.} \label{tblMLGalerkinExample1C6}
\end{centering}
\end{table}

The second example uses the Poisson problem
\begin{eqnarray*}
-\Delta{u} &=& f \quad \mbox{in} \quad \Omega, \\
 u &=& 0 \quad \mbox{on} \quad \partial \Omega.
\end{eqnarray*}
We take $\Omega = [-1,1]^2$ and $f(x,y) = \sin(\pi x) \cos\left(\frac{\pi}{2} y\right)$. The exact solution is given by $$u(x,y) = \frac{\sin(\pi x) \cos\left(\frac{\pi}{2} y\right)}{1.25\pi^2}.$$ We again use the $C^6$ Wendland function as the kernel, with the same 5 levels as for the first example. To verify that \eqref{eqnNitscheBound} holds, we first check for the basis functions. For the boundary norm, since $\Omega=[-1,1]^2$, without loss of generality, we consider the $x=-1$ boundary of the domain only. Then we have boundary integrals of the form \citep[Appendix D]{Fas07}
\begin{eqnarray*}
\| \nabla \Phi_{\delta} \cdot \mathbf{n} \|^2_{L_2(\partial \Omega)} &=& \int\limits_{-\delta}^{\delta} \left( \frac{\partial \phi_{\delta}}{\partial y} \right)^2  \mathrm{d}y \\
&=& \delta^2 \int\limits_{-\delta}^{\delta} \left( \frac{22y}{\delta^2}\left(\frac{16y^2}{\delta^2}+\frac{7y}{\delta}+1 \right)\left(1-\frac{y}{\delta} \right)^7 \right)^2  \mathrm{d}y \\
&=& \frac{603969552384\delta}{11305}.
\end{eqnarray*}

For the interior, we have
\begin{eqnarray*}
\| \nabla \Phi_{\delta} \|^2_{L_2(\Omega)} &=& \delta^2 \int\limits_{\Theta} \int\limits_{0}^{\delta} r \left(\frac{d}{\mathrm{d}r} \phi_{\delta}(r) \right)^2 \mathrm{d}r \, \mathrm{d}\theta \\
&=& \delta^2 \int\limits_{\Theta} \int\limits_{0}^{\delta} r \left( \frac{22r}{\delta^2} \left(\frac{16r^2}{\delta^2}+\frac{7r}{\delta}+1 \right)\left(1-\frac{r}{\delta} \right)^7 \right)^2 \mathrm{d}r \, \mathrm{d}\theta \\
&=& \frac{2453\delta^2}{4845} \int\limits_{\Theta} \mathrm{d}\theta,
\end{eqnarray*}
where $\Theta$ specifies the support of $\phi$ in $\Omega$ and hence this last expression is finite and does not depend on $\delta$.

In practical applications, we need to select a value of $\beta_N$ satisfying $\beta_N > 2C_N^2/\delta$. In \cite{GriS02}, it is proposed to estimate $C_N/\sqrt{\delta}$ as the maximum eigenvalue of the generalised eigenvalue problem,
\begin{equation}
\mathbf{B}\mathbf{v} = \lambda\mathbf{D}\mathbf{v},
\end{equation}
where
\begin{equation}
B_{ij} = \int\limits_{\partial \Omega} \left(\nabla \Phi_i \cdot \mathbf{n}\right)\left(\nabla \Phi_j \cdot \mathbf{n}\right),
\end{equation}
and
\begin{equation}
D_{ij} = \int\limits_{\Omega} \nabla \Phi_i \cdot \nabla \Phi_j,
\end{equation}
where $i$ and $j$ run over the indices of all the radial basis functions with support overlapping the boundary. The extra calculation involved in this step is not significant since the entries of $\mathbf{B}$ are required for the construction of the stiffness matrix and the set of centres overlapping the boundary will generally be small compared to the entire set of centres. The maximum eigenvalue can also be efficiently computed with a simultaneous Rayleigh-quotient minimisation method \citep{LonM80}.

\begin{table}[!htbp]
\begin{centering}
\small
\begin{tabular}{|c|c|c|c|c|c|}
\hline
Level&1&2&3&4&5\\
\hline
$\delta_j$ & 2 & 1 & 0.5 & 0.25 & 0.125\\
$\|e_j\|_2$ & 8.125e{-3} & 1.451e{-3} & 3.229e{-4} & 8.217e{-5} & 2.219e{-5}\\
$\|e_j\|_{\infty}$ & 1.057e{-2} & 2.350e{-3} & 6.504e{-4} & 1.963e{-4} & 5.962e{-5} \\
$\kappa_j$ & 5.633e{+5} & 1.001e{+6} & 8.056e{+5} & 4.572e{+5} & 2.374e{+5} \\
\hline
\end{tabular} \caption{The scaling factors, approximation errors and condition numbers of the stiffness matrices for the multiscale Galerkin algorithm with Dirichlet boundary conditions} \label{tblMLGalerkinExample2C6}
\end{centering}
\end{table}

The results are in Table \ref{tblMLGalerkinExample2C6} and support the theoretical findings above. We note that whilst \cite{Wen98b} did not find convergence after the third level with a similar algorithm, this may be due to the approximations used to calculate the integrals, rather than the algorithm itself. The potential for errors in integration to affect the performance of Galerkin techniques are well known \citep{Str73,BreS08}. To estimate the integrals, we used the MATLAB functions \texttt{quad2d} and \texttt{quad} with an absolute tolerance value of $1e^{-10}$. We also estimated the non-zero integration range both to speed up the calculations as well as to reduce numerical error which can result if for example, we integrate over the entire domain $[-1,1]^2$ whilst the function only has a very small support.

\subsection{Nested multiscale algorithm} \label{SectionNestedExample}

In this subsection, we consider the same example as in Section \ref{subSectionMlGalExp} however now with Algorithm \ref{AlgNestedGalerkin} with $K=2$ and $n=2$. We use the first two levels of the example described in Section \ref{subSectionMlGalExp} as the inner iteration. We also use the same kernel. Our choice of $K$ and $n$ leads to a 6 level algorithm. A similar example was considered in \cite[Section 5]{Wen98b}, however a lack of information regarding the exact approximation spaces used for the inner and outer level iterations means we have not been able to compare our results. The results from this 6 level nested algorithm are in Table \ref{tblnestedGal1}.

\begin{table}[!htbp]
\begin{centering}
\small
\begin{tabular}{|c|c|c|c|c|c|c|}
\hline
Level&1&2&3&4&5&6\\
\hline
$N$ & 25 & 81 & 25 & 81 & 25 & 81 \\
$\|e_j\|_2$ & 8.000e{-4} & 2.145e{-4} & 2.045e{-4} & 2.088e{-4} & 1.991e{-4} & 2.034e{-4} \\
$\|e_j\|_{\infty}$ & 1.721e{-3} & 7.274e{-4} & 4.325e{-4} & 7.002e{-4} & 4.165e{-4} & 6.742e{-4} \\
$\kappa_j$ & 1.608e{+3} & 3.125e{+3} & 1.608e{+3} &  3.125e{+3} & 1.608e{+3} &  3.125e{+3} \\
\hline
\end{tabular} \caption{The number of centres, scaling factors, approximation errors and condition numbers of the stiffness matrices for the nested multiscale Galerkin algorithm} \label{tblnestedGal1}
\end{centering}
\end{table}

The results indicate erratic convergence and approximation errors far inferior to those using Algorithm \ref{AlgGalerkin}. This is not surprising since Theorem \ref{thmWendNestConv} indicates convergence of our approximation to the best approximation to $u$ from $V_1+V_2$ whilst in Algorithm \ref{AlgGalerkin} our approximation is formed from $V_1+\ldots+V_5$.

\section{Analysis of convergence} \label{SectionAlphaAnalysis}

In this section we will focus on estimation of the convergence and verifying approximation orders. Similarly to \cite{Wen10}, we will also rewrite the convergence results in terms of fill distances, which is the usual form of convergence results for radial basis function approximations \citep{Wen05,Fas07}.

\subsection{Multiscale Galerkin algorithm}

We consider Algorithm \ref{AlgGalerkin} with $h_1 = \mu$ and $h_{j+1} = \mu h_j$. Since $\mu$ is a constant, we can rewrite \eqref{eqnAlpha1} as
\begin{equation*}
\alpha_1 = c_3 \mu.
\end{equation*}
Then with Corollary \ref{corGalerkinL2} we have that
\begin{equation}
\|e_n\|_{L_2(\Omega)} := \|u-\widetilde{u}_n\|_{L_2(\Omega)} \leq C h_n^{1-\sigma} \|u\|_{H^1(\Omega)},
\label{eqnEh}
\end{equation}
with
\begin{equation}
\sigma := -\log c_3 / \log \mu \label{eqnSigma}.
\end{equation}
Hence we can either express our convergence in terms of an exponent of $h_n$ or equivalently $\alpha_1^n$.

It is of interest that the error bounds do not depend on the kernel used for the approximation spaces. Typically with a kernel which generates $H^{\tau}(\mathbb{R}^d)$, we see error bounds proportional to $h^{\tau}$. Since our kernel for the error analysis generates $H^1(\mathbb{R}^d)$, we have $h_n^1$. Henceforth, we analyse the convergence in terms of $\alpha_1$. We can calculate estimates of $\alpha_1$, which we denote by $\widetilde{\alpha}_1$, as follows
\begin{equation*}
\widetilde{\alpha}_{1,n} := \frac{\|e_n\|_{L_2(\Omega)}}{\|e_{n-1}\|_{L_2(\Omega)}}.
\end{equation*}

\begin{table}[!htbp]
\begin{centering}
\small
\begin{tabular}{|c|c|c|}
\hline
 & $C^2$ WF & $C^6$ WF \\
\hline
$\widetilde{\alpha}_{1,2}$ & 0.128 & 0.268 \\
$\widetilde{\alpha}_{1,3}$ & 0.318 & 0.494 \\
$\widetilde{\alpha}_{1,4}$ & 0.507 & 0.662 \\
$\widetilde{\alpha}_{1,5}$ & 0.618 & 0.739 \\
\hline
\end{tabular} \caption{The estimated convergence rates $\widetilde{\alpha}_{1,n}$ using the results for the $L_2$ norm errors from the first example in Section \ref{subSectionMlGalExp} with the $C^2$ and $C^6$ Wendland functions.} \label{tblMLconv}
\end{centering}
\end{table}

We can see that the estimated values of $\alpha_1$ are higher with the $C^6$ Wendland function, which indicates that we should not necessarily expect faster convergence with a smoother Wendland function and consequently we should not expect an error bound proportional to $h^{\tau}$.

\subsection{Nested multiscale Galerkin algorithm}

In this subsection, we will focus on considering convergence of the nested multiscale Galerkin algorithm in terms of $\alpha_1$ and $\alpha_2$. Note that a bound for the error at level $n(K+1)$ in terms of the fill distance at level $n(K+1)$, such as in \eqref{eqnEh}, will not be possible here because the fill distance at level $n(K+1)$ only depends on $n$ and not on $K$. In other words, increasing $K$ has no effect on the final fill distance.

An additional benefit of considering the nested multiscale algorithm is more estimates of $\alpha_1$, particularly for repeated applications of the inner iterations (when $K>1$). Table \ref{tblMLconv2} gives the estimates of $\alpha_1$ and $\alpha_2$ from considering successive $L_2$ norm error estimates in Section \ref{SectionNestedExample}. Successive error estimates will be of the form
\begin{equation*}
\frac{\|e_{i}\|_{L_2(\Omega)}}{\|e_{i-1}\|_{L_2(\Omega)}}.
\end{equation*}
By definition of our nested multiscale algorithm, for $i=n+1,2n+1,\ldots$ we have an estimate for $\alpha_2$ and in all other cases, an estimate for $\alpha_1$.

Table \ref{SectionNestedExample} presents the estimated convergence rates $\widetilde{\alpha}_{1,n}$ and $\widetilde{\alpha}_{2,n}$ using the results for the $L_2$ norm errors from the example in Section \ref{SectionNestedExample}.

\begin{table}[!htbp]
\begin{centering}
\small
\begin{tabular}{|c|c|c|}
\hline
Level & $\widetilde{\alpha}_{1,n}$ & $\widetilde{\alpha}_{2,n}$  \\
\hline
2 & 0.268 & \\
3 &  & 0.953 \\
4 & 1.021 & \\
5 &  & 0.954 \\
6 & 1.022 &  \\
\hline
\end{tabular} \caption{The estimated convergence rates $\widetilde{\alpha}_{1,n}$ and $\widetilde{\alpha}_{2,n}$ using the results for the $L_2$ norm errors from the example in Section \ref{SectionNestedExample}.} \label{tblMLconv2}
\end{centering}
\end{table}

Interestingly, the difficulties with convergence are not due to $\alpha_2$ which is seen to be less than 1 in all cases. This is empirical evidence of the effectiveness of the smoothing nature of the inner iterations, in that after the inner iterations, the errors are again of a global nature and hence a return to a coarse grid is justified. Convergence is affected by the repeated application of the inner iterations for which $\widetilde{\alpha}_{1,n}$ is always greater than 1. Empirically, this appears to suggest that the more localised features have already been captured in the approximate solution.

This also implies that the angles between the approximation subspaces are close to zero since the linear convergence rate in Theorem \ref{thmWenNestedConv} can be bounded above by the angle between the subspaces. This is covered next.

\subsubsection{Angles between subspaces}

For our analysis of the convergence of the nested multiscale algorithm, we require the following definition of the angle between subspaces.
\begin{defn}
Let $\mathcal{H}_1$ and $\mathcal{H}_2$ be closed subspaces of a Hilbert space $\mathcal{H}$ with $U := \mathcal{H}_1 \cap \mathcal{H}_2$. Then the angle $\alpha$ between $\mathcal{H}_1$ and $\mathcal{H}_2$ is given by
\begin{equation*}
\cos \alpha = \sup \left\{\left\langle u,v \right\rangle: u \in \mathcal{H}_1 \cap U^{\bot}, v \in \mathcal{H}_2 \cap U^{\bot} \mbox{ and } \|u\|, \|v\| \leq 1 \right\}.
\end{equation*}
\end{defn}

It is well known \citep{SmiSW77,Deu90} that Algorithm \ref{AlgNestedGalerkin} converges linearly in the following sense.

\begin{thm}
\label{thmWendNestConv}
Let $u^{*}$ be the best approximation to $u$ from $V_1+\ldots+V_j$ with respect to $\|\cdot\|_{H^1(\Omega)}$. Let $\widetilde{u}$ be the approximation from Algorithm \ref{AlgNestedGalerkin}. Let $\alpha_j$ be the angle between $V_j$ and $\cap_{i=j+1}^k V_i$. Then
\begin{equation*}
\|u^{*} - \widetilde{u}\|_{H^1(\Omega)} \leq c^K \|u\|_{H^1(\Omega)},
\end{equation*}
where
\begin{equation*}
c^2 \leq 1 - \prod_{j=1}^{n-1} \sin^2 \alpha_j.
\end{equation*}
\end{thm}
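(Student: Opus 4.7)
The plan is to reinterpret Algorithm \ref{AlgNestedGalerkin} as a sequence of orthogonal projections in a Hilbert space and then invoke the classical contraction bound for products of such projections due to \citep{SmiSW77,Deu90}. Since the bilinear form $a(\cdot,\cdot)$ is symmetric (immediate from \eqref{eqnNitsche} in the Dirichlet case and standard in the pure Neumann/Robin case with a self-adjoint $\mathcal{L}$), continuous, and coercive on $V = H^1(\Omega)$, it defines an inner product whose induced norm $\|\cdot\|_a$ is equivalent to $\|\cdot\|_{H^1(\Omega)}$. Let $P_\ell : V \to V_\ell$ denote the $a$-orthogonal projection onto $V_\ell$, which is well defined since $V_\ell$ is finite-dimensional and hence closed.

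The first step is to check that the inner iteration propagates the error by projections. Substituting $\langle f,v\rangle_{L_2(\Omega)} = a(u,v)$ into the defining equation for $s_{kn+\ell}$, exactly as in the proof of Lemma \ref{lemGalerkinInterStageErrorBound}, gives
\[
a(s_{kn+\ell}, v) = a(e_{kn+\ell-1}, v), \quad v \in V_\ell,
\]
which identifies $s_{kn+\ell} = P_\ell\, e_{kn+\ell-1}$. Hence $e_{kn+\ell} = (I - P_\ell)\, e_{kn+\ell-1}$, and writing $T := (I - P_n)(I - P_{n-1})\cdots(I - P_1)$ for the error propagator of one full outer cycle yields $e_{n(K+1)} = T^{K+1} u$ (using $e_0 = u$).

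Next I would split the error along the total approximation space. Let $W := V_1 + \cdots + V_n$, which is closed as a finite-dimensional subspace, and let $u^{*} = P_W u$ be the $a$-best approximation to $u$ from $W$. For any $w \in W^{\perp}$ one has $P_\ell w = 0$ for every $\ell$, so $Tw = w$; conversely $W$ is invariant under each $I - P_\ell$ and hence under $T$. Decomposing $u = u^{*} + (u - u^{*})$ with $u - u^{*} \in W^{\perp}$ gives
\[
e_{n(K+1)} = T^{K+1} u^{*} + (u - u^{*}),
\]
so that $u^{*} - \widetilde{u}_{n(K+1)} = T^{K+1} u^{*}$ and the problem reduces to bounding $\|T|_W\|_a$.

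For the final step I would invoke the Smith--Solomon--Wagner estimate as stated in \citep{SmiSW77,Deu90}: the operator norm of $T$ restricted to $W$ is bounded by a constant $c$ satisfying $c^{2} \leq 1 - \prod_{j=1}^{n-1} \sin^{2} \alpha_{j}$, with $\alpha_j$ the angle specified in the theorem. Iterating yields $\|T^{K+1} u^{*}\|_a \leq c^{K+1} \|u^{*}\|_a \leq c^{K+1} \|u\|_a$, and the claim in the $\|\cdot\|_{H^1(\Omega)}$-norm then follows by the equivalence $\|\cdot\|_a \sim \|\cdot\|_{H^1(\Omega)}$ (absorbed into the constant, or into $c$ after a mild restatement). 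The main obstacle is purely bookkeeping: matching the paper's angle definition (between $V_j$ and the residual-intersection subspace) to the formulation appearing in \citep{SmiSW77,Deu90} and verifying that their hypotheses are met, since once this identification is made the proof is essentially a direct quotation of the cited classical results.
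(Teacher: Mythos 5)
The paper offers no proof of this theorem at all: it is stated as a known consequence of the alternating-projection results in \citep{SmiSW77,Deu90}. Your proposal is therefore a reconstruction of the omitted argument rather than an alternative to one, and the reconstruction is essentially the intended one: the energy inner product makes each inner step an orthogonal projection, the computation from Lemma \ref{lemGalerkinInterStageErrorBound} gives $s_{kn+\ell}=P_\ell e_{kn+\ell-1}$, one outer cycle applies $T=(I-P_n)\cdots(I-P_1)$, and the splitting of $u$ along $W=V_1+\cdots+V_n$ reduces everything to bounding $\|T\vert_W\|_a$. All of that is correct.

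The one place you have not closed the argument is the step you call ``purely bookkeeping,'' and it is not. The Smith--Solmon--Wagner estimate applies to a product of orthogonal projections onto closed subspaces $M_1,\ldots,M_n$ and controls the contraction constant by the angles between $M_j$ and $\bigcap_{i>j}M_i$. The operators actually composed in Algorithm \ref{AlgNestedGalerkin} are $I-P_\ell$, i.e.\ the $a$-orthogonal projections onto $V_\ell^{\perp}$, not onto $V_\ell$. So the theorem must be invoked with $M_\ell=V_\ell^{\perp}$, which gives $\bigcap_\ell M_\ell=W^{\perp}$ and $P_{\bigcap M_\ell}=I-P_W$ (your hand-made splitting then comes for free as part of the cited statement), and a constant $c$ governed by the angles between $V_j^{\perp}$ and $\bigcap_{i>j}V_i^{\perp}$. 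To express this through the $V_i$ themselves you need the duality lemma (in Deutsch's survey, going back to Solmon): the angle between two closed subspaces equals the angle between their orthogonal complements, so the relevant angle is between $V_j$ and $V_{j+1}+\cdots+V_n$ --- the \emph{sum}, not the intersection $\bigcap_{i=j+1}^{n}V_i$ written in the statement. Applying SSW naively with $M_i=V_i$, which is what the literal reading of the statement invites, would be applying the contraction bound to the wrong operators; this identification is the substantive content of the proof and should be written out. Two minor further points: your argument yields the exponent $K+1$ rather than $K$ (harmless, since it is stronger), and it lives entirely in the $a$-norm, so $u^{*}$ is the $a$-best approximation and passing to $\|\cdot\|_{H^1(\Omega)}$ costs equivalence constants that the bare $c^{K}$ in the statement does not display.
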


This means that we need to estimate $\sin \alpha_j$ to obtain upper bounds for the convergence rate. We follow a similar approach to \cite{BeaLB00}. We firstly define modified sets of centres as $\widetilde{X}_1 = X_1$ and
\begin{equation*}
\widetilde{X}_i = X_i \backslash \bigcup_{j=1}^{i-1} X_j, \quad i\geq2,
\end{equation*}
and the corresponding approximation spaces as
\begin{equation*}
\widetilde{V}_i = \mbox{span}\left\{\Phi_{\delta_i}(\cdot,\mathbf{x}), \mathbf{x} \in \widetilde{X}_i \right\}.
\end{equation*}
Then we need to find the supremum of the inner product of $u \in \widetilde{V}_i$ and $v \in \mathcal{A}_{i+1}$ where
\begin{equation*}
\mathcal{A}_{i+1} =  \bigcup_{j=i+1}^K \widetilde{V}_j = \mbox{span}\left\{\Phi_{\delta_j}(\cdot,\mathbf{x}_j), \mathbf{x}_j \in \bigcup_{j=i+1}^K \widetilde{X}_j \right\},
\end{equation*}
 with $\|u\|=\|v\|=1$.  With the matrix $\mathbf{K}^{\{12\}}$ given by
\begin{equation*}
K^{\{12\}}_{i,j} = \left\langle \Phi_{\delta_i}(\cdot,\mathbf{x}_i), \Phi_{\delta_j}(\cdot,\mathbf{x}_j) \right\rangle, \mathbf{x}_i \in \widetilde{X}_i, \mathbf{x}_j \in \bigcup_{j=i+1}^K \widetilde{X}_j,
\end{equation*}
and with coefficient vectors $\mu$ and $\nu$ for $u$ and $v$ respectively, we seek the supremum of $\mu \mathbf{K}^{\{12\}} \nu$.
We also define matrices $\mathbf{K}^{\{1\}}$ and $\mathbf{K}^{\{2\}}$ as
\begin{equation*}
K^{\{1\}}_{ij} = \left\langle \Phi_{\delta_i}(\cdot,\mathbf{x}_i), \Phi_{\delta_j}(\cdot,\mathbf{x}_j)\right\rangle, \mathbf{x}_i \in \widetilde{X}_i, \mathbf{x}_j \in \widetilde{X}_i,
\end{equation*}
and
\begin{equation*}
K^{\{2\}}_{ij} = \left\langle \Phi_{\delta_i}(\cdot,\mathbf{x}_i), \Phi_{\delta_j}(\cdot,\mathbf{x}_j)\right\rangle, \mathbf{x}_i \in \bigcup_{j=i+1}^K \widetilde{X}_j, \mathbf{x}_j \in \bigcup_{j=i+1}^K \widetilde{X}_j.
\end{equation*}
Let the Cholesky decomposition of $\mathbf{K}^{\{1\}}$ be $\mathbf{L}_1^T \mathbf{L}_1$. This is well-defined since $\mathbf{K}^{\{1\}}$ is strictly positive definite and symmetric. Then $\|u\|^2 = \mu^T \mathbf{L}_1^T \mathbf{L}_1 \mu$ and letting $\boldsymbol{\gamma}_1 = \mathbf{L}_1 \mu$ gives $\|u\|^2 = \boldsymbol{\gamma}_1^T \boldsymbol{\gamma}_1$.
We can follow a similar approach with $\mathbf{K}^{\{2\}}$ which gives $\|v\|^2 = \boldsymbol{\gamma}_2^T \boldsymbol{\gamma}_2$ with $\mathbf{K}^{\{2\}} = \mathbf{L}_2^T \mathbf{L}_2$ and $\boldsymbol{\gamma}_2 = \mathbf{L}_2 \nu$. However in this case since $\mathbf{K}^{\{2\}}$ is the union of radial basis functions with (possibly) different scaling factors, we cannot be sure that $\mathbf{K}^{\{2\}}$ is positive definite. In our example, $\mathbf{K}^{\{2\}}$ was always positive definite and we do not dwell further on this. Sufficient conditions for an interpolation matrix constructed with several scaling factors to be positive definite can be found in \cite[(11)]{BozLRS04} which also requires the Fourier transform of a Wendland function from \cite{CheSW11} to compute a lower bound on the minimum eigenvalue as given in \cite[Theorem 12.3]{Wen05}.

Then we have
\begin{eqnarray*}
\left\langle u,v \right\rangle &=& \mu^T \mathbf{K}^{\{12\}} \nu \\
&=& \mu^T \mathbf{L}_1^T (\mathbf{L}_1^{-1})^T \mathbf{K}^{\{12\}} \mathbf{L}_2^{-1} \mathbf{L}_2 \nu \\
&=& \boldsymbol{\gamma}_1^T \mathbf{M} \boldsymbol{\gamma}_2,
\end{eqnarray*}
with $\mathbf{M} := (\mathbf{L}_1^{-1})^T \mathbf{K}^{\{12\}} \mathbf{L}_2^{-1}$. The supremum of the inner product is given by the largest singular value of $\mathbf{M}$. We denote this supremum by $\sin \widetilde{\alpha}_j$ and the results with $\{\widetilde{V}_i \}_{i=1}^5$ are in Table \ref{tblSinAlphaJ}. We note that since $\widetilde{X}_i \subseteq X_i$, $\sin \widetilde{\alpha}_i$ is a lower bound on $\sin \alpha_i$. We chose to estimate $\sin \widetilde{\alpha}_i$ since by removing nested centres from later levels, we had less difficulties with singular matrices.

\begin{table}[!htbp]
\begin{centering}
\small
\begin{tabular}{|c|c|c|c|c|}
\hline
i&1&2&3&4\\
\hline
$\sin \widetilde{\alpha}_i$ & 9.849e{-3} & 2.684e{-2}  & 4.153e{-2}  & 6.987e{-2} \\
\hline
\end{tabular} \caption{The estimates of $\sin \widetilde{\alpha}_i$ with the approximation spaces $\{\widetilde{V}_i \}_{i=1}^5$. } \label{tblSinAlphaJ}
\end{centering}
\end{table}

\section{Acknowledgments}

We used some datasets and code from \cite{Fas07} as well as Andrea Tagliasacchi's ``kd-tree for matlab" library, available at \url{www.mathworks.com/matlabcentral/fileexchange/21512}. This work was supported by the Australian Research Council. The authors thank Ian H. Sloan and Robert S. Womersley for helpful discussions.

\bibliographystyle{agsm}
\bibliography{AndrewChernihPhDthesis}{}

\end{document}